\newcommand{\ootpi}{\frac{1}{2\pi i}}
\newcommand{\V}{\mathcal{V}}
\newcommand{\Vbar}{\overline{\V}}
\newcommand{\Vbarbb}{\overline{\V}_{bb}}
\newcommand{\C}{\mathbf{C}}
\newcommand{\N}{\mathbf{N}}
\newcommand{\Z}{\mathbf{Z}}
\newcommand{\End}{\mathrm{End}}
\newcommand{\after}{\circ}
\newcommand{\blank}{\text{\textendash}}
\newcommand{\labelledMapMapsto}[5]{%
\begin{align*}%
  #1 : #2 &\longrightarrow #3\\%
       #4 &\longmapsto #5%
\end{align*}%
}
\newcommand{\displayperiod}{\,\text{.}}
\newcommand{\displaycomma}{\,\text{,}}
\DeclareMathOperator{\id}{id} 
\newcommand{\vac}{|0\rangle}
\theoremstyle{definition}
\newtheorem{definition}{Definition}[section]
\newtheorem{remark}[definition]{Remark}
\theoremstyle{plain}
\newtheorem{corollary}[definition]{Corollary}
\newtheorem{proposition}[definition]{Proposition}
\newtheorem{theorem}[definition]{Theorem}
\newtheorem*{theorem*}{Theorem}
\title{Geometric Vertex Algebras}
\author{Daniel Bruegmann}
\date{}
\begin{document}
\maketitle
\begin{abstract}
  Geometric vertex algebras are a simplified version of Huang's geometric vertex operator algebras.
  We give a self-contained account of the equivalence of geometric vertex algebras with~$\Z$-graded vertex algebras.
\end{abstract}
A theorem of Huang~\cite{HuangTwoDimConfGeomAndVOAs} establishes the equivalence of vertex operator algebras and geometric vertex operator algebras.
We give a self-contained account of this theorem in the simplified setting of~$\Z$-graded vertex algebras and geometric vertex algebras, that is, without the infinitesimal conformal symmetries given by the Virasoro algebra.

Both geometric vertex algebras and~$\Z$-graded vertex algebras have an underlying~$\Z$-graded vector space~$\V = \bigoplus_{k\in \Z} \V_k$ over~$\C$. 
In a geometric vertex algebra, the multiplication maps~$\mu$ take elements~$a_1,\ldots,a_n \in \V$ placed at pairwise distinct points~$z_1,\ldots,z_n \in \C$, and the product~$\mu(a_1,\ldots, a_n)(z_1,\ldots, z_n)$ is an element of~$\Vbar:= \prod_{k \in \Z} \V_k$ which contains~$\V$ as a subspace.
Since the result of the multiplication is an infinite sequence of elements of~$\V$ instead of a single element of~$\V$, associativity is not formulated in the usual way but instead involves an infinite sum. 
Furthermore,~$\mu(a)(z)$ holomorphically depends on~$z=(z_1,\ldots,z_n)$, and behaves meromorphically as the~$z_i$ come closer to each other.
By contrast, the corresponding vertex algebra contains the Laurent expansions of the functions~$\mu(a,z,b,0) \in \Vbar$ of~$z\in \C\setminus \{0\}$ for all~$a,b\in \V$, whose coefficients turn out be elements of~$\V$.
\begin{theorem*}
The category of~$\Z$-graded vertex algebras is equivalent to the category of geometric vertex algebras.
\end{theorem*}
\paragraph{Related Work.}
The analogous theorem of Huang is Theorem~5.4.5 in~\cite{HuangTwoDimConfGeomAndVOAs}.
The meaning of the above theorem is very close to that of Theorem~2.12 in Runkel's lecture notes~\cite{RunkelCFTandVOAs}.
The holomorphic integral scale covariant field theories of~\cite{RunkelCFTandVOAs} are very similar to geometric vertex algebras.
The main difference is the kind of convergence in the infinite sum in the associativity axiom.
The definition of geometric vertex algebras in this article uses normally convergent sums in its associativity axiom, as opposed to pointwise convergence as in~\cite{RunkelCFTandVOAs}, seen to imply pointwise absolute convergence there. 
We show that the geometric vertex algebra constructed from a~$\Z$-graded vertex algebra always has normally convergent sums in its associativity property.

Costello and Gwilliam construct~$\Z$-graded vertex algebras from certain holomorphic factorization algebras on~$\C$ in~\cite{CostelloGwilliam}. 
Our exposition of obtaining a vertex algebra from a geometric vertex algebra is modeled on their work.
\paragraph{Acknowledgments.}
The author would like to thank his advisors Peter Teichner and Andr\'e Henriques for their support and suggestions. 
Thanks to Bertram Arnold, Achim Krause and Katrin Wendland for discussions.
Thanks to Andr\'e Henriques and Eugene Rabinovich for their feedback on a draft.
This work was carried out at the Max Planck Institute for Mathematics in Bonn and financially supported by its IMPRS on Moduli Spaces.
Thanks to an anonymous referee who made a large number of helpful suggestions.
\tableofcontents
\section{Holomorphic Functions and Normal Convergence}
Unless stated otherwise, all vector spaces are over the field~$\C$ of complex numbers.
Geometric vertex algebras have an~$n$-ary operation parametrized by 
~$z\in \C^n \setminus \Delta$ where
\[
 \Delta = \Delta_n = \{ (z_1,\ldots, z_n) \in \C^n \mid z_i = z_j \text{ for some } i,j \in \{1,\ldots, j\} \text{ with } i\neq j\}\displayperiod
\]
for every natural number~$n \geq 0$.
We write~$\Delta$ instead of~$\Delta_n$ when~$n$ is apparent from the context. 
Before giving the definition of a geometric vertex algebra, we explain the relevant notions of holomorphicity and convergence of infinite sums of holomorphic functions.
Both are based on considering finite-dimensional subspaces of the~$\V_k$ for all~$k \in \Z$ of a~$\Z$-graded vector space~$\V$, a reflection of the rather algebraic nature of vertex algebras.
\begin{definition}
  Let~$U \subseteq \C^n$ be open and~$X$ be a vector space.
  A map~$f:U \rightarrow X$ is \emph{holomorphic} if~$f$ is locally a holomorphic function with values in a finite-dimensional subspace of~$X$. This means that every point~$p \in U$ has an open neighborhood~$V$ together with a finite-dimensional subspace~$Y \subseteq X$ with~$f(V) \subseteq Y$ and~$f|_V : V \rightarrow Y$ holomorphic.
  If~$\V$ is a~$\Z$-graded vector space, then~$\Vbar := \prod_{k \in \Z} \V_k$ and~$p_k:\Vbar \rightarrow \V_k$ denotes the projection for~$k \in \Z$, and~$\mathcal{O}(U;\Vbar)$ denotes the vector space of~$\Vbar$-valued functions~$f$ on~$U$ each of whose components~$p_k \after f$ is holomorphic.
\end{definition}
Note that if~$Y'$ is another subspace of~$X$ such that~$f(V) \subseteq Y'$, then~$f|_V$ is holomorphic as a map to~$Y'$ if and only if~$f|_V$ is holomorphic as a map to~$Y$.
The following proposition is a version of the identity theorem for holomorphic functions in our sense. Its proof uses the Taylor series of holomorphic functions, which can be defined locally and whose coefficients are again holomorphic functions~$U \rightarrow X$.
See \cite[Chapter I, A, Theorem 6]{GunningRossi} for the identity theorem in the case of~$X=\C$, and a more detailed version of the proof given below.
\begin{proposition}[Identity Theorem]
  Let~$U \subseteq \C^n$ be connected and~$f:U \rightarrow X$ be holomorphic.
  If~$f^{-1}(0)$ has non-empty interior, then~$f=0$.
\end{proposition}
\begin{proof}
Let~$I$ be the interior of~$f^{-1}(0)$.
The set~$I$ is equal to the set of points at which the Taylor series of~$f$ vanishes because~$f$ is holomorphic.
The coefficients of the Taylor series of~$f$ are holomorphic functions on~$U$, so their zero sets are closed subsets of~$U$.
Hence~$I$ is closed as the intersection of closed sets.
Since~$I\subseteq U$ is closed, open and non-empty, and~$U$ is connected, it follows that~$U=I \subseteq f^{-1}(0)$.
\end{proof}
If~$f: U \rightarrow X$ is holomorphic, then~$f(K)$ is contained in a finite-dimensional subspace of~$X$ for all compact~$K \subseteq U$. 
As a corollary to the previous proposition, a holomorphic function on a connected set like~$\C^n \setminus \Delta_n$ globally takes values in a finite-dimensional subspace of~$X$.
\begin{corollary}
Let~$U\subseteq \C^n$ be open,~$X$ be a vector space,~$f: U \rightarrow X$ be holomorphic.
If~$U$ is connected, then~$f(U)$ is contained in a finite-dimensional subspace of~$X$.
\end{corollary}
\begin{proof}
Let~$p \in U$ and let~$V$ be an open neighborhood of~$p$ in~$U$ such that~$f(V)$ is contained in a finite-dimensional subspace~$Y$.
Let~$\overline{f}$ be the composite of~$f$ and the quotient map~$X \rightarrow X/Y$.
It follows that~$\overline{f}$ is holomorphic because locally~$\overline{f}$ is the composite of~$f$ and a quotient map between finite-dimensional vector spaces.
The holomorphic function~$\overline{f}$ is zero on~$V$ by construction.
The identity theorem for holomorphic functions implies that~$\overline{f} = 0$, so that~$f(U) \subseteq Y$.
\end{proof}
Recall from~\cite[Part A, Chapter 3]{Remmert} that a series~$\sum_{i\in I} f_i$ of holomorphic functions on an open~$U \subseteq \C^n$ is called \emph{normally convergent} if every~$p\in U$ has a neighborhood~$V$ such that~$\sum_{i\in I} ||f||_V < \infty$, where~$||f||_V = \sup_{q\in V} |f(q)|$ is the supremum semi-norm of~$f$ on~$V$.
Normal convergence is equivalent to~$\sum_{i \in I} ||f_i||_K <\infty$ for all compact~$K\subseteq U$, and this is the condition we generalize to our setting.
\begin{definition}
Let~$U \subseteq \C^n$ and~$X$ be a vector space.
A sum~$\sum_{i\in I} f_i$ of holomorphic~$X$-valued functions on an open~$U \subseteq \C^n$ is \emph{normally convergent} if, for all compact~$K\subseteq U$, there exists a finite-dimensional subspace~$Y$ such that~$f_i(K) \subseteq Y$ for all~$i\in I$ and~$\sum_{i \in I} ||f_i||_K < \infty$, where the supremum semi-norm~$||f_i||_K = \sup_{x\in K} ||f_i(x)||_Y$ refers to one of the equivalent norms~$||\blank||_Y$ on~$Y$.
\end{definition}
\section{Geometric Vertex Algebras}
\begin{definition}
  A \emph{geometric vertex algebra} consists of:
\begin{itemize}
  \item A $\Z$-graded vector space~$\V=\bigoplus_{k\in \Z} \V_k$ over~$\C$. 
  \item Linear maps~$\mu: \V^{\otimes n} \rightarrow \mathcal{O}(\C^n \setminus \Delta; \Vbar)$ for~$n\geq 0$ where~$\Vbar = \prod_{k \in \Z} \V_k$.  
For all~$n$, we write
\[
  \mu(a)(z) = \mu(a,z) = \mu(a_1,z_1,\ldots,a_n,z_n)
\] for the value of the function corresponding to~$a =a_1 \otimes \ldots \otimes a_n \in \V^{\otimes n}$ at~$z=(z_1,\ldots,z_n)\in \C^n \setminus \Delta$.
\end{itemize}
  The axioms of a geometric vertex algebra are:
  \begin{itemize} 
    \item (permutation invariance) 
\[ \mu(a^\sigma,z^\sigma) = \mu(a,z)\] for~$a \in \V^n$,~$z\in \C^n \setminus \Delta$ and every permutation~$\sigma \in \Sigma_n$. 
    \item (insertion at zero) \[\mu(a,0) = a\] for all~$a\in \V$, where~$a$ is viewed as element of~$\Vbar$ via the embedding 
    \[\V = \bigoplus_n \V_n \hookrightarrow \prod_n \V_n = \Vbar\displayperiod \]
    \item (associativity)
For~$a_1, \ldots, a_m \in \V,$ $b_1,\ldots, b_n \in \V$,~$z \in \C^{m+1} \setminus\Delta$, $w \in \C^n \setminus\Delta$ with~$\max_i |w_i| < \min_{1\leq j \leq m} |z_j -z_{m+1}|$, and~$l\in\Z$,
\begin{align}
&\sum_{k \in \Z} p_l\mu(a_1,z_1,\ldots, a_m,z_m, p_k\mu(b_1, w_1, \ldots, b_n, w_n),z_{m+1}) \label{display:AssociativityLHS}\\
 &= p_l\mu(a_1,z_1,\ldots, a_m,z_m, b_1, w_1+z_{m+1}, \ldots, b_n, w_n+ z_{m+1}) \label{display:AssociativityRHS}
\end{align}
where the sum of~$\V_l$-valued functions of~$z$ and~$w$ is normally convergent.
Here~$m\geq 0$ and~$n\geq 0$. 
  \item ($\C^\times$-equivariance) For all~$z\in \C^\times$,~$a_1, \ldots a_n\in \V$ and~$w \in \C^n \setminus\Delta$
    \[
       z.\mu(a_1,w_1, \ldots, a_n, w_n) = \mu(z.a_1, zw_1, \ldots, z.a_n, zw_n)
    \]
    where~$z\in \C^\times$ acts on~$\Vbar$ and~$\V$ via multiplication by~$z^l$ on~$\V_l$ for all~$l \in \Z$.  

  \item (meromorphicity) For all~$a,b\in\V$, there exists an integer~$N$ such that the function~$z^N\mu(a,z,b,0)$ of~$z\in \C \setminus \{ 0\}$ extends holomorphically to a function of~$z\in \C$.
\end{itemize}
\end{definition}
In the associativity axiom of a geometric vertex algebra, the condition that \[\max_i |w_i| < \min_{1\leq j \leq m} |z_j -z_{m+1}| \] is equivalent to saying that all~$w_j +z_{m+1}$,~$j=1,\ldots,n$, are contained in the largest open ball around~$z_{m+1}$ not containing any of the~$z_i$ for~$i=1,\ldots,m$.
\begin{figure}
  \begin{center}
  \begin{tikzpicture}
    \fill (3,3) circle (2pt) node[below] {$z_5$};
    \fill (3.8,3.9) circle (2pt) node[right] {$z_1$};
    \fill (0,3.5) circle (2pt) node[below] {$z_2$};
    \fill (-1,1.5) circle (2pt) node[above] {$z_3$};
    \fill (2,1) circle (2pt) node[left] {$z_4$};
    \draw[dashed] (3,3) circle (1);
    \fill (3.6,2.7) circle (2pt) node[below right] {$w_3 + z_5$};
    \fill (3,3.7) circle (2pt) node[above left] {$w_1 + z_5$};
    \fill (2,3) circle (2pt) node[below left] {$w_2 + z_5$};
  \end{tikzpicture}
  \end{center}
  \caption{In the associativity axiom of a geometric vertex algebra, the~$w_i+z_{m+1}$ have to be closer to $z_{m+1}$ than any of the other~$z_j$ for $1\leq j \leq m$.}
  \label{figure:PrerequisiteInAssociativityAxiom}
\end{figure}
See Figure~\ref{figure:PrerequisiteInAssociativityAxiom}.
Leaving out the projection~$p_l$ in~(\ref{display:AssociativityLHS}) and~(\ref{display:AssociativityRHS}) and taking normal convergence of sums in~$\Vbar$ to mean normal convergence in each~$\V_l, l\in\Z$, associativity says that
\begin{align}
&\sum_{k \in \Z} \mu(a_1,z_1,\ldots, a_m,z_m, p_k\mu(b_1, w_1, \ldots, b_n, w_n),z_{m+1}) \label{display:AssociativityLHSinVbar}\\
 &= \mu(a_1,z_1,\ldots, a_m,z_m, b_1, w_1+z_{m+1}, \ldots, b_n, w_n+ z_{m+1}). \label{display:AssociativityRHSinVbar}
\end{align}
We use the sum in~(\ref{display:AssociativityLHSinVbar}) as the definition of repeated multiplication in a geometric vertex algebra when the associativity axiom implies that this sum converges.
\begin{definition}\label{definition:RepeatedMu}
Let~$(\V,\mu)$ be a geometric vertex algebra.
For~$a_1, \ldots, a_m \in \V,$ $b_1,\ldots, b_n \in \V$,~$z \in \C^{m+1} \setminus\Delta$, $w \in \C^n \setminus\Delta$ with~$\max_i |w_i| < \min_{1\leq j \leq m} |z_j -z_{m+1}|$, let
\begin{align*}
&\mu(a_1,z_1,\ldots, a_m,z_m, \mu(b_1, w_1, \ldots, b_n, w_n),z_{m+1})\\
& := \sum_{k \in \Z} \mu(a_1,z_1,\ldots, a_m,z_m, p_k\mu(b_1, w_1, \ldots, b_n, w_n),z_{m+1}) \in \Vbar,
\end{align*}
so that associativity can be written as
\begin{align*}
&\mu(a_1,z_1,\ldots, a_m,z_m, \mu(b_1, w_1, \ldots, b_n, w_n),z_{m+1})\\
& = \mu(a_1,z_1,\ldots, a_m,z_m, b_1, w_1+z_{m+1}, \ldots, b_n, w_n+ z_{m+1}).
\end{align*}
\end{definition}

By permutation invariance the multiplication~$\mu$ only depends on the set of pairs
\[
\{(a_1,z_1),\ldots,(a_m,z_m) \}
\]
for~$a_1,\ldots, a_m \in \V$ and~$z \in \C^m \setminus\Delta$ and not on the ordering of the pairs.
\begin{definition}
The image~$\mu(\emptyset)$ of~$1\in \C$ under the multiplication map
\[\C = \V^{\otimes 0} \rightarrow \mathcal{O}(\mathrm{pt};\Vbar) \cong \Vbar\]
for~$n=0$ is called the \emph{vacuum vector}~$\vac$ of~$\V$, or unit.
\end{definition}
The vacuum vector is actually an element of~$\V_0 \subseteq \V$ because it is invariant under the action of~$\C^\times$.
The case~$n=0$ of the associativity axiom implies that 
\[
  \mu(a_1,z_1,\ldots, a_m, z_m, \vac, z_{m+1}) = \mu(a_1,z_1,\ldots,a_m,z_m)
\]
for~$a_1,\ldots, a_m \in \V$ and~$z\in \C^{m+1} \setminus\Delta$.

The next proposition says that the action of~$\C^\times$ on~$\V$ extends to an action of the group~$\C^\times \ltimes \C$ of affine transformations of~$\C$
on the subspace
\begin{align}
  \Vbarbb = \{ x\in \Vbar \mid \exists C \in \Z\ \ \forall k < C : x_k = 0  \} \label{equation:Vbarbb}
\end{align}
of bounded below vectors.
More generally, when constructing geometric vertex algebras from vertex algebras, we will show that~$\mu(a_1,z_1,\ldots,a_m,z_m)$ vanishes in sufficiently low degree, i.\,e., is an element of~$\Vbarbb$.
If the~$\Z$-graded vector space~$\V$ is bounded from below, that is, if there is an integer~$K$ such that~$\V_k = 0$ for all~$k\leq K$, then~$\Vbarbb = \Vbar$. 
We do not need the next proposition to obtain a vertex algebra; rather it explains how the~$m=0$ case of associativity can be thought of as translation invariance of~$\mu$ by making translations act on~$\Vbarbb$ by using~$\mu$.  
\begin{proposition}\label{proposition:TranslationsActOnVbarbb}
  The vector space~$\Vbarbb$ is a representation of~$G= \C^\times \ltimes \C$ where~$w\in \C$ acts as~$w.x = \sum_{k\in \Z}\mu(p_k(x),w)$ and~$\lambda \in \C^\times$ acts as~$(\lambda.x)_k = \lambda^k x_k$ for~$x \in \Vbarbb$.
\end{proposition}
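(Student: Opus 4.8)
The plan is to check the group axioms for the two given actions together with the semidirect-product compatibility between them, after first verifying that the translation action is well defined on~$\Vbarbb$. The dilation action~$(\lambda.x)_k = \lambda^k x_k$ is manifestly a~$\C^\times$-action (and clearly preserves boundedness below), so the real work concerns the translation action~$w.x = \sum_k \mu(p_k(x),w)$.

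First I would show that for homogeneous~$a \in \V_k$ the function~$\mu(a,w)$, which is the~$n=1$ multiplication defined on all of~$\C$ since~$\Delta_1 = \emptyset$, is \emph{bounded below}, with components only in degrees~$\geq k$. This is the one place where the analytic hypotheses enter: by~$\C^\times$-equivariance one gets~$p_l\mu(a,\lambda w) = \lambda^{l-k}\,p_l\mu(a,w)$ for~$\lambda \in \C^\times$, so on~$\C^\times$ each component equals~$c_l\,w^{l-k}$ for some~$c_l \in \V_l$; since~$p_l\mu(a,\cdot)$ extends holomorphically across~$w=0$, the exponent~$l-k$ must be~$\geq 0$ whenever~$c_l \neq 0$, forcing~$p_l\mu(a,w)=0$ for~$l<k$. (Insertion at zero,~$\mu(a,0)=a$, gives the consistent value at the origin.) Consequently, for~$x \in \Vbarbb$ with~$x_k = 0$ for~$k<C$, each fixed degree~$l$ receives contributions to~$w.x$ only from~$C \leq k \leq l$, so the defining sum is finite in every degree and~$w.x \in \Vbarbb$ is well defined with no convergence issue. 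The same bookkeeping gives~$0.x = x$ via insertion at zero.

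Next I would verify additivity of translations,~$w_1.(w_2.x) = (w_1+w_2).x$, which is the structural heart of the statement and the place where associativity is used. Expanding~$w_1.(w_2.x) = \sum_l \sum_j \mu\bigl(p_l\mu(p_j x, w_2),\, w_1\bigr)$ by linearity, the degree-$i$ component only involves indices with~$C \leq j \leq l \leq i$, so all sums are finite and may be reordered freely. Summing over~$l$ first and applying the~$m=0$,~$n=1$ case of the associativity axiom, whose domain condition is vacuous for~$m=0$ because the minimum is taken over the empty set and so holds for all points, gives~$\sum_l \mu\bigl(p_l\mu(p_j x, w_2),\, w_1\bigr) = \mu(p_j x,\, w_1+w_2)$, and summing over~$j$ yields~$(w_1+w_2).x$.

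Finally I would check the semidirect-product relation~$\lambda.(w.x) = (\lambda w).(\lambda.x)$, which follows directly from~$\C^\times$-equivariance: expanding the right-hand side as~$\sum_k \lambda^k \mu(x_k,\lambda w)$ and using~$\lambda.\mu(x_k,w) = \lambda^k \mu(x_k,\lambda w)$ collapses it, degree by degree, to~$\lambda.(w.x)$. With the~$\C^\times$-action, translation additivity, and this compatibility in hand, together with~$0.x=x$, the assignment~$(\lambda,w) \mapsto \bigl(x \mapsto w.(\lambda.x)\bigr)$ is a homomorphism out of~$G = \C^\times \ltimes \C$, since the three relations I have checked are precisely a presentation of the affine group. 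I expect the only genuinely nonformal point to be the first step, the boundedness below of~$\mu(a,w)$, since it is the sole place needing the holomorphicity and equivariance input and it is exactly what makes the infinite sum defining the translation action collapse to a finite sum in each degree, so that associativity, an equality of convergent sums, can be applied termwise.
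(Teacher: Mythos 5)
Your proof is correct and follows essentially the same route as the paper: translation additivity via the $m=0$ case of the associativity axiom, the identity via insertion at zero, and the semidirect-product relation via $\C^\times$-equivariance. Your opening step (boundedness below of~$\mu(a,w)$ for homogeneous~$a$, via equivariance plus holomorphicity across~$w=0$) is a welcome addition: the paper merely asserts that the relevant double sums are finite in each component of~$\Vbarbb$, and your degree argument is exactly what justifies that assertion and the well-definedness of the translation action.
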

\begin{proof}
  For~$w_1,w_2 \in \C$ and~$x\in \Vbar_{bb}$
  \begin{align}
    w_1.(w_2.x) &= \sum_{k_1 \in \Z} \mu\left(p_{k_1}\left(\sum_{k_2 \in \Z} \mu(p_{k_2}(x),w_2)\right),w_1\right) \notag\\
    &= \sum_{k_1 \in \Z} \sum_{k_2 \in \Z} \mu(p_{k_1}(\mu(p_{k_2}(x),w_2)),w_1)\notag\\
    &= \sum_{k_2 \in \Z} \sum_{k_1 \in \Z} \mu(p_{k_1}(\mu(p_{k_2}(x),w_2)),w_1)\notag\\
    &= \sum_{k_2 \in \Z} \mu(p_{k_2}(x),w_2+w_1)\label{display:prop:TranslationsActOnVbarbb:Associativity}\\
    &= (w_1 +w_2). x\displaycomma \notag
  \end{align}
  where the sums are exchangeable because they are finite in each component of~$\Vbar$ and line~(\ref{display:prop:TranslationsActOnVbarbb:Associativity}) uses the associativity of~$\V$ for~$m=0$.
  Furthermore
  \begin{align*}
    0.x = \sum_{k\in \Z} \mu(p_k(x),0) = \sum_{k\in \Z} p_k(x) = x.    
  \end{align*}
  These actions of~$\C^\times$ and~$\C$ assemble to an action of~$\C^\times \ltimes \C$ because
  \begin{align*}
    \lambda. (w. (\lambda^{-1}.x)) &= \lambda . \sum_{k\in \Z}\mu(p_k(\lambda . x),w) \\ 
  &= \sum_{k\in \Z} \lambda . \mu(\lambda^{-1} . p_k(x),w) \\
  &= \sum_{k \in \Z} \mu(p_k(x),\lambda w)\\
  &= (\lambda w) . x
  \end{align*} 
  because of~$\C^\times$-equivariance. 
\end{proof}
\begin{definition}
If~$\V$ is a~$\Z$-graded vector space and~$v \in \V$ is homogeneous, then~$|v|$ denotes the degree of~$v$.
If~$f:\V \rightarrow \V'$ is a morphism of~$\Z$-graded vector spaces, then~$f$ extends to a linear map~$f:\Vbar \rightarrow \overline{\V'}$ given by~$f$ in each component.
\end{definition}
\begin{definition}
A \emph{morphism~$f:(\V, \mu) \rightarrow (\V', \mu')$ of geometric vertex algebras} is a morphism~$f:\V \rightarrow \V'$ of~$\Z$-graded vector spaces such that
\[
f(\mu(a_1,z_1,\ldots,a_n)) = \mu'(f(a_1),z_1,\ldots, f(a_n),z_n)
\]
for all~$a_1,\ldots,a_n \in \V$ and~$n \geq 0$.
\end{definition}
The definition of a~$\Z$-graded vertex algebra given in the next section involves the modes which are linear maps~$\V \rightarrow \V$.
We now define the modes in terms of a geometric vertex algebra, and show that they arise as the coefficients of the Laurent expansion of the function~$\mu(a,z,\blank,0)$ of~$z\in \C \setminus \{0\}$.
\begin{proposition}\label{proposition:ModeDefinitionAndDegree}
Let~$(\V,\mu)$ be a geometric vertex algebra. Let~$a\in \V$ and~$k \in \Z$. 
The~\emph{$k$-th mode} of~$a$ is a well-defined linear map
\labelledMapMapsto{a_{(k)}}{\V}{\V}{b}{a_{(k)}b := \ootpi \int_{S^1} z^k \mu(a,z;b,0) dz.}
If~$a\in \V$ is homogeneous, then~$a_{(k)}$ is homogeneous of degree~$|a| -k -1$.
A priori, the map~$a_{(k)}$ is well-defined as a map~$\V \rightarrow \Vbar$. 
Recall that we identify~$\V = \bigoplus_k \V_k$ with its image in~$\Vbar= \prod_k\V_k$ under the natural injection.
\end{proposition}
\begin{proof}
Let~$a,b\in \V$ be homogeneous.
Note that~$z.(a_{(k)}b) = z^{|a|-k-1 + |b|}a_{(k)}b$ for all~$z\in \C^\times$ because \begin{align*}
2\pi i \ z.(a_{(k)}b) &= z.\int_{S^1} w^k \mu(a,w;b,0) dw \\
&= \int_{S^1} w^k z.\mu(a,w;b,0) dw \\
&= \int_{S^1} w^k \mu(z.a,zw;z.b,0) dw \\ 
&= z^{|a|+|b|}\int_{S^1} w^k \mu(a,zw;b,0) dw \\ 
&= z^{|a|-k-1+|b|}\int_{zS^1} w^k \mu(a,w;b,0) dw &\text{ (substituted~$w/z$)}\\
&= z^{|a|-k-1+|b|}\int_{S^1} w^k \mu(a,w;b,0) dw &\text{ (holomorphic)}\\
&= 2\pi i\ z^{|a|-k-1+|b|} a_{(k)}b.
\end{align*}
This implies that~$p_l(a_{(k)} b)$ is zero if~$l\neq |a|-k-1+|b|$.
Thus~$a_{(k)}b \in \V_{|a| -k -1+|b|} \subseteq \V$.
Since every element of~$\V$ is a finite sum of homogeneous elements, it follows that~$a_{(k)}b \in \V$ for all~$a,b \in \V$.
\end{proof}
\begin{definition}
Let~$\V$ be a~$\Z$-graded vector space over~$\C$. 
Let~$A\subseteq \C$ be an annulus with center~$0$.
If~$f:A \rightarrow \Vbar$ is a holomorphic function, meaning in particular that it takes values in a finite-dimensional subspace in each degree, then we define the \emph{Laurent expansion}~$L(f)(x)\in \prod_{k\in \Z} \left(\V_k[[x^{\pm 1}]]\right)$ 
of~$f$ on~$A$ componentwise. 
\end{definition}
For~$\V$ a~$\Z$-graded vector space, we identify~$\V[[x^{\pm 1}]]$ with a subspace of
\[
\prod_{k\in \Z} \left(\V_k [[x^{\pm 1}]]\right) = \left(\prod_{k\in \Z} \V_k\right) [[x^{\pm 1}]] = \Vbar [[x^{\pm 1}]]
\]
via the injective linear map
\[
\sum_l A_l x^l \longmapsto \left(\sum_l p_k (A_l) x^l\right)_{k \in \Z}.
\]
\begin{proposition}
Let~$(\V, \mu)$ be a geometric vertex algebra.
For all~$a,b\in \V$, the Laurent expansion~$L[z \mapsto \mu(a,z,b,0)](x)$ is an element of~$\V[[x^{\pm 1}]]$ and is equal to~$\sum_l a_{(l)} b\ x^{-l-1}$.
\end{proposition}
\begin{proof}
  By the integral formula for the coefficients of the Laurent expansion 
  \begin{align*}
    L[z \mapsto \mu(a,z,b,0)](x)_k &= \sum_{l\in \Z}\ootpi \int_{S^1} z^lp_k(\mu(a,z,b,0)) dz \cdot x^{-l-1} \\
                                   &= \sum_{l\in \Z}  p_k\left(\ootpi \int_{S^1}z^l\mu(a,z,b,0)dz\right)x^{-l-1} \\
    &= \sum_{l\in \Z}  p_k(a_{(l)}b)x^{-l-1}. 
  \end{align*}
  For fixed~$l\in \Z$, the sum
  \(
    \sum_k p_k(a_{(l)}b)
  \)
  is finite and equals~$a_{(l)} b$, so~$\sum_l a_{(l)} b x^{-l-1}$ is the desired preimage.
\end{proof}
\begin{proposition}\label{proposition:MeromorphicIfBoundedBelow}
Assume that the underlying datum~$(\V,\mu)$ of a geometric vertex algebra satisfies all the axioms of a geometric vertex algebra except meromorphicity. If~$\V$ is bounded from below, then~$(\V,\mu)$ is meromorphic.
\end{proposition}
\begin{proof}
  If~$\V$ is bounded from below, then meromorphicity follows because
  \[
  \mu(b,w,c,0) = \sum_{k} b_{(k)}c w^{-k-1}
  \]
  for~$w\neq 0$, and~$|b_{(k)} c| = |b| + |c| - k - 1$ for~$b$ and~$c$ homogeneous, so~$b_{(k)} c$ is zero for~$k$ large enough.     
\end{proof}
\section{Vertex Algebras}
\begin{definition}
  A~$\Z$-graded \emph{vertex algebra} consists of:
  \begin{itemize}
  \item a~$\Z$-graded vector space~$\V = \bigoplus_{l\in \Z} \V_l$ over~$\C$, 
  \item a linear map
\labelledMapMapsto{Y}{\V}{\End{\V}[[x^{\pm 1}]]}{a}{ Y(a,x) = \sum_{k\in \Z} a_{(k)} x^{-k-1},}
where the~\emph{$k$-th mode}~$a_{(k)}$ is a homogeneous endomorphism of~$\V$ of degree~$|a| - k - 1$ for homogeneous~$a\in \V$,
\item a degree~$1$ endomorphism~$T$ of~$\V$, 
  \item a vector~$\vac \in \V_0$ called the \emph{vacuum},
\end{itemize}
such that:
\begin{itemize}
  \item (locality) For all~$a,b \in \V$, there exists a natural number~$N$ such that
    \[(x-y)^N[Y(a,x),Y(b,y)] = 0\]
    in~$\End(\V)[[x^{\pm 1}, y^{\pm 1}]]$.
  \item (translation) $T\vac = 0$ and~$[T,Y(a,x)] = \partial_x Y(a,x)$ for all~$a\in \V$.
  \item (creation) \( Y(a,x)\vac \in a + x \V[[x]] \) for all~$a\in \V$.
  \item (vacuum) \( Y(\vac,x) = \id_{\V} \).
\end{itemize}
\end{definition}
\begin{remark}
In terms of the modes of~$a \in \V$, the translation axiom for~$a\in \V$ is equivalent to demanding that
\(
  [T,a_{(k)}] = -k a_{(k-1)} 
\)
for all~$k \in \Z$.
The creation axiom for~$a$ is equivalent to the equation~$a_{(-1)}\vac = a$ and~$a_{(k)}\vac = 0$ for~$k\geq 0$.
The vacuum axiom says that for all~$a\in \V$ we have~$\vac_{(k)} a = 0$ for~$k \neq -1$ and~$\vac_{(-1)} a = a$.
\end{remark}
In a vertex algebra, the translation operator~$T$ and the vacuum~$\vac$ are uniquely determined by the vertex operators~$Y(a,x)$ as in the following proposition.
\begin{proposition}\label{proposition:VAStructureDeterminedByY}
  Let~$(\V,Y,T,\vac)$ be a vertex algebra.
  Then
  \(
    Ta = a_{(-2)}\vac 
  \)
  for all~$a\in \V$.
  If~$a\in \V$ satisfies~$Y(a,x) = \id_\V$, then~$a = \vac$.
\end{proposition}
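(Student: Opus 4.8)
The plan is to prove both statements by first rewriting the axioms in their mode form, as recorded in the block preceding the proposition, and then carrying out two short direct computations. No limiting or convergence arguments are needed; everything reduces to linear algebra with the modes~$a_{(k)}$.

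For the first claim, I would start from the creation axiom in mode form, which gives~$a = a_{(-1)}\vac$. Applying~$T$ and commuting it past the mode yields
\[
  Ta = T a_{(-1)} \vac = [T, a_{(-1)}]\vac + a_{(-1)} T\vac\displayperiod
\]
The translation axiom in mode form says~$[T, a_{(k)}] = -k a_{(k-1)}$, so at~$k=-1$ it produces~$[T, a_{(-1)}] = a_{(-2)}$, while~$T\vac = 0$ is the other half of the translation axiom. Substituting both facts makes the second summand vanish and identifies the first as~$a_{(-2)}\vac$, giving~$Ta = a_{(-2)}\vac$.

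For the second claim, I would view~$\id_\V$ as an element of~$\End(\V)[[x^{\pm 1}]]$, namely the series whose coefficient is~$\id_\V$ in the~$x^0$-slot and zero in every other slot. Comparing this with the expansion~$Y(a,x) = \sum_{k\in\Z} a_{(k)} x^{-k-1}$, the hypothesis~$Y(a,x) = \id_\V$ forces~$a_{(-1)} = \id_\V$ and~$a_{(k)} = 0$ for all~$k \neq -1$. Evaluating the single nonzero mode on the vacuum and invoking the creation axiom~$a_{(-1)}\vac = a$ then gives~$a = \id_\V\,\vac = \vac$.

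I do not expect a genuine obstacle here, since both parts are immediate once the axioms are phrased through modes. The only point requiring care is the sign in the translation commutator~$[T, a_{(k)}] = -k a_{(k-1)}$: at~$k=-1$ it contributes~$+a_{(-2)}$ rather than~$-a_{(-2)}$, and getting this sign right is what makes the first identity come out as stated.
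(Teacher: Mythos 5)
Your proposal is correct and follows essentially the same route as the paper: the first identity is the same computation~$Ta = Ta_{(-1)}\vac = [T,a_{(-1)}]\vac + a_{(-1)}T\vac = a_{(-2)}\vac$ using the mode form of the translation and creation axioms, and the second claim is the same one-line use of the creation axiom (the paper evaluates~$Y(a,x)\vac$ at~$x=0$, while you equivalently extract the~$x^0$-coefficient~$a_{(-1)} = \id_\V$ and apply it to~$\vac$). Your remark about the sign~$[T,a_{(-1)}] = +a_{(-2)}$ is exactly right and matches the paper.
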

\begin{proof}
  Let~$a\in \V$.
  It follows that
  \begin{align*}
    Ta = Ta_{(-1)}\vac = [T,a_{(-1)}]\vac + a T\vac = a_{(-2)}\vac + 0
  \end{align*}
  by the creation axiom and the translation axiom.
  If~$Y(a,x) = \id_\V$, then 
  \[
    a = Y(a,x)\vac|_{x=0} = \id(\vac)|_{x=0} = \vac
  \]
  by the creation axiom.
\end{proof}
\begin{definition}
A \emph{morphism~$f:(\V,Y,T,\vac) \rightarrow (\V', Y', T', \vac')$ of~$\Z$-graded vertex algebras} is a morphism~$f:\V \rightarrow \V'$ of~$\Z$-graded vector spaces such that
\begin{align}
f(Y(a,x)b) = Y'(f(a),x)f(b) \label{equation:MorphismOfVertexAlgebras}
\end{align}
for all~$a, b \in \V$ and $f(\vac) = \vac'$.
\end{definition}
\begin{remark}
Equation~\ref{equation:MorphismOfVertexAlgebras} says that~$f(a_{(k)} b) = f(a)_{(k)'} f(b)$ for all~$k\in \Z$.
It follows from Proposition~\ref{proposition:VAStructureDeterminedByY} that
\[
f(Ta) = f(a_{(-2)} \vac) = f(a)_{(-2)'} f(\vac) = f(a)_{(-2)'} \vac' = T'f(a)
\]
if~$f:(\V,Y,T,\vac) \rightarrow (\V', Y', T', \vac')$ is a morphism of vertex algebras.
\end{remark}

\section{The Equivalence of Categories}
In this section, we describe functors~$\Phi$ and~$\Psi$ relating the categories of geometric vertex algebras and~$\Z$-graded vertex algebras, and prove that they are well-defined on morphisms and inverse to each other on objects and morphisms.
Section~\ref{section:FromGeometricVertexAlgebrasToVertexAlgebras} shows that~$\Phi(\V,\mu)$ is a~$\Z$-graded vertex algebra if~$(\V,\mu)$ is a geometric vertex algebra.
Section~\ref{section:FromVertexAlgebrasToGeometricVertexAlgebras} shows that~$\Psi(\V,Y,T,\mu)$ is a geometric vertex algebra if~$(\V,Y,T,\mu)$ is a~$\Z$-graded vertex algebra.
\begin{definition}
  On morphisms, the functor~$\Phi$ from the category of geometric vertex algebras to the category of~$\Z$-graded vertex algebras is defined by the equation~$\Phi(f)=f$ of underlying morphisms of~$\Z$-graded vector spaces.
  
  On morphisms, the functor~$\Psi$ from the category of~$\Z$-graded vertex algebras to the category of geometric vertex algebras is defined by the equation~$\Psi(g) = g$ of underlying morphisms of~$\Z$-graded vector spaces.
\end{definition}
We now state a more detailed version of the theorem.
\begin{theorem}\label{theorem:GeometricVertexAlgebrasAndVertexAlgebrasDetailedVersion}
The category of geometric vertex algebras is isomorphic to the category of~$\Z$-graded vertex algebras.

  There is a bijection~$\Phi$ which maps a geometric vertex algebra~$(\V,\mu)$ to a~$\Z$-graded vertex algebra~$\Phi(\V,\mu)=(\V,Y_\mu,T_\mu,\vac_\mu)$ where
  \begin{align*}
    Y(a,x)b &= L[z\mapsto\mu(a,z,b,0)](x) \quad (\in \V[[x^{\pm 1}]]) \\
    Ta &= \partial_z \mu(a,z)|_{z=0}\\
    \vac &= \mu(\emptyset) 
  \end{align*}  
  for all~$a,b \in \V$.
  The inverse~$\Psi$ of~$\Phi$ is given by~$\Psi(\V,Y,T,\vac) = (\V,\mu_Y)$ where, for~$a=a_1,\ldots,a_m \in \V^{\otimes m}$, $m\geq0$, the function~$\mu_Y(a)$ is the unique~$\Vbar$-valued holomorphic function on~$\C^m\setminus\Delta$ such that
  \begin{align*}
    \mu_Y(a)(z) &= Y(a_1,z_1)\ldots Y(a_m,z_m)\vac
  \end{align*}
  for~$z=(z_1,\ldots,z_m)\in\C^m\setminus\Delta$ with~$|z_1| > \ldots > |z_m|$.

  If~$f:\V\rightarrow\V'$ be a morphism of~$\Z$-graded vector spaces, then~$f$ is a morphism
  $(\V,\mu)\rightarrow (\V',\mu')$ of geometric vertex algebras if and only if~$f$ is a morphism~$\Phi(\V,\mu) \rightarrow \Phi(\V',\mu')$ of~$\Z$-graded vertex algebras.
\end{theorem}
The right-hand side above is equal to
\begin{align*}
   &\sum_{k_1 \in \Z} {a_1}_{(k_1)}z^{-k_1-1}\ldots \sum_{k_m \in \Z} {a_m}_{(k_m)}z^{-k_m-1}\vac\\
   &=\sum_{k \in \Z^m} {a_1}_{(k_1)}\ldots {a_m}_{(k_m)}\vac z^{-k_1 -1}_1\ldots z^{-k_m -1}_m
\end{align*}
where every sum converges normally in each component of~$\Vbar$ by Proposition~\ref{proposition:ProductOfYs}.
Here, the modes~$a_{(k)}:\V\rightarrow \V$ for~$a\in \V,$~$k\in\Z$, and~$\V$ a~$\Z$-graded vertex algebra or geometric vertex algebra extend to linear maps~$a_{(k)} :\Vbar\rightarrow\Vbar$ as finite sums of homogeneous maps which all extend to linear maps~$\Vbar\rightarrow\Vbar$.
An equivalent definition is~$a_{(k)}b = \sum_{l\in \Z} a_{(k)} p_l b$ for~$a\in \V, b\in \Vbar$.
\begin{proposition}\label{proposition:PhiIsWellDefined}
  The functor~$\Phi$ is well-defined.
\end{proposition}
\begin{proof}
 Proposition~\ref{proposition:VertexAlgebraFromGeometricVertexAlgebra} says that~$\Phi(\V,\mu)$ is a well-defined~$\Z$-graded vertex algebra.
 If~$f:(V,\mu) \rightarrow (\V',\mu')$ is a morphism of geometric vertex algebras, then~$\Phi(f):(\V, Y_\mu, T_\mu, \vac_\mu)\rightarrow (\V',Y_{\mu'},T_{\mu'}, \vac_{\mu'})$ is a morphism of~$\Z$-graded vertex algebras because
\[
  \Phi(f)(\vac_\mu) = f(\vac_\mu) = f(\mu(\emptyset)) = \mu'(\emptyset) = \vac_{\mu'} = \vac_{\mu'}  
\]
and
\begin{align*}
  \Phi(f)(Y_\mu(a,x)b) &= f(Y_\mu(a,x)b)\\
  &= f\left( L[z\mapsto\mu(a,z,b,0)](x) \right)\\
  & = L[z\mapsto f(\mu(a,z,b,0))](x)\\
  &= L[z\mapsto \mu'(f(a),z,f(b),0)](x)\\
  & = Y_{\mu'}(f(a),x)f(b)\\
  & = Y_{\mu'}(\Phi(f)(a),x)f(b) 
\end{align*}
for all~$a,b \in \V$.
Since~$\Phi$ is the identity mapping on underlying morphisms of~$\Z$-graded vector spaces, it follows that~$\Phi$ is compatible with composition and preserves identity morphisms.
\end{proof}
\begin{proposition}\label{proposition:PsiIsWellDefined}
  The functor~$\Psi$ is well-defined.
\end{proposition}
\begin{proof}
Proposition~\ref{proposition:GeometricVertexAlgebraIsWellDefined} says that~$\Psi(\V,Y,T,\vac)$ satisfies the axioms of a geometric vertex algebra, and~$\mu_Y$ as in the statement of the Theorem is constructed in~Proposition~\ref{proposition:ProductOfYs}.
Let~$f:(\V,Y,T,\vac) \rightarrow (\V',Y',T',\vac')$ be a morphism of~$\Z$-graded vertex algebras.
If~$Y(a,z)b = \sum_{k\in\Z} a_{(k)}z^{-k-1}b$ converges absolutely in each component of~$\Vbar$ for~$a\in \V, b\in\Vbar, z\neq 0$, or all~$z\in\C$ if~$a_{(k)}b =0$ for~$k\geq 0$, then the sum defining~$Y(f(a),z)f(b)$ converges absolutely, too, and is equal to~$f(Y(a,z)b) \in \Vbar$ because
\begin{align*}
Y(f(a),z)f(b) &= \sum_{k\in\Z} f(a)_{(k)}z^{-k-1}f(b)\\
&= \sum_{k\in\Z} f(a_{(k)}z^{-k-1}b)\\
&= f\left( \sum_{k\in\Z} a_{(k)}z^{-k-1}b.\right)
\end{align*}
We conclude that~$\Psi(f):(\V,\mu_Y) \rightarrow (\V',\mu_{Y'})$ is a morphism of geometric vertex algebras because
\begin{align*}
\Psi(f)(\mu_Y(a_1,z_1,\ldots, a_m,z_m)) &= f(\mu_Y(a_1,z_1,\ldots, a_m,z_m))\\
&= f(Y(a_1,z_1)\ldots Y(a_m,z_m)\vac)\\
&= Y'(f(a_1),z_1)\ldots Y'(f(a_m),z_m)f(\vac)\\
&= Y'(f(a_1),z_1)\ldots Y'(f(a_m),z_m)\vac'\\
&= \mu_{Y'}(f(a_1),z_1,\ldots, f(a_m),z_m)\\
&= \mu_{Y'}(\Phi(f)(a_1),z_1,\ldots, \Phi(f)(a_m),z_m)
\end{align*}
for~$a_1,\ldots,a_m \in \V$ and~$z\in \C^m$ with~$|z_1| > \ldots > |z_m|$, and hence all~$z\in \C^m\setminus\Delta$ by the identity theorem.
Again, it follows that~$\Psi$ is compatible with composition and preserves identity morphisms.
\end{proof}
\begin{proof}[Proof of Theorem~\ref{theorem:GeometricVertexAlgebrasAndVertexAlgebrasDetailedVersion}]
  First, we prove that~$\Phi\after\Psi = \id$. Let~$(\V,Y,T,\vac)$ be a~$\Z$-graded vertex algebra.
  To prove~\(\Phi(\Psi(\V,Y,T,\vac)) = (\V,Y,T,\vac)\), it suffices to prove~$Y_{\mu_Y} = Y$ as a consequence of Proposition~\ref{proposition:VAStructureDeterminedByY}. By construction
  \begin{align*}
  Y_{\mu_Y}(a,x)b &= L[z\mapsto\mu_Y(a,z,b,0)](x)= L[z\mapsto Y(a,z)b](x) = Y(a,x)b
  \end{align*}
  for all~$a,b\in \V$.

  We now prove that~$\Psi\after\Phi = \id$.
  Let~$(\V,\mu)$ be a geometric vertex algebra.
  Note that~$\Psi(\Phi(\V,\mu)) = (\V,\mu_{Y_\mu})$ where
  \begin{align}
    \mu_{Y_\mu}(a_1,z_1,\ldots,a_m,z_m) &= Y_\mu(a_1,z_1)\ldots Y_\mu(a_m,z_m)\vac_\mu\notag\\
    &=\sum_{k_1\in \Z} {a_1}_{(k_1)}z^{-k_1-1}_1\sum_{k_m\in \Z}{a_m}_{(k_m)}z^{-k_m-1}_m\vac_\mu\label{equation:MuYMuInTermsOfModes}
  \end{align}
  for all~$a_1,\ldots, a_m \in \V$ and~$z\in \C^m\setminus\Delta$ with~$|z_1| > \ldots > |z_m|$ by Proposition~\ref{proposition:ProductOfYs} applied to the~$\Z$-graded vertex algebra~$\Phi(\V,\mu)=(\V,Y_\mu,T_\mu,\vac_\mu)$.
  The modes in~(\ref{equation:MuYMuInTermsOfModes}) are those of~$(\V,\mu)$, so Proposition~\ref{proposition:MuInTermsOfModes} implies that
  \[
    \mu_{Y_\mu}(a_1,z_1,\ldots,a_m,z_m) = \mu(a_1,z_1,\ldots,a_m,z_m)
  \]
  for all~$a_1,\ldots, a_m \in \V$ and~$z\in \C^m \setminus\Delta$ with~$|z_1| > \ldots > |z_m|$.
  Thus, for fixed~$a$, the functions~$\mu_{Y_\mu}(a,z)$ and~$\mu(a,z)$ of~$z \in \C^m\setminus\Delta$ are analytic continuations of the same function, and therefore agree by the identity theorem.

  The statement about morphisms in the theorem follows from Proposition~\ref{proposition:PhiIsWellDefined} and Proposition~\ref{proposition:PsiIsWellDefined}.
  Both~$\Phi$ and~$\Psi$ are the identity mapping on underlying morphisms of~$\Z$-graded vector spaces, so it follows that~$\Phi\after\Psi = \id$ and~$\Psi\after\Phi = \id$ on morphisms, too, proving that~$\Phi$ is an isomorphism with inverse~$\Psi$.
\end{proof}
\section{From Geometric Vertex Algebras to Vertex Algebras}\label{section:FromGeometricVertexAlgebrasToVertexAlgebras}
Given a geometric vertex algebra, we construct a~$\Z$-graded vertex algebra with the same underlying~$\Z$-graded vector space. 
\begin{proposition}\label{proposition:ModesInGeometricVertexAlgebra}
  Let~$\V$ be a geometric vertex algebra.
  Let~$a_1,\ldots,a_m \in \V$ and~$i,j \in \{1,\ldots,m\}$ with~$i<j$.
  For all~$(z_1,\ldots,z_{i-1},z_{i+1},\ldots, z_m) \in \C^{m-1}\setminus\Delta$ and~$\varepsilon > 0$ with~$\varepsilon < |z_l - z_j|$ for all~$l \neq i,j$,
  \begin{align*}
    &\ootpi\int_{\partial B_{\varepsilon}(z_j)} (z_i-z_j)^k\mu(a,z) dz_i \\
    &= \mu(a_1,z_1,\ldots, \widehat{a_i,z_i,}\ldots, a_{j-1},z_{j-1}, {a_i}_{(k)}a_j,z_j, a_{j+1}, z_{j+1}, \ldots, a_m,z_m).
  \end{align*}
\end{proposition}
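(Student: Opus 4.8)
The plan is to recognize the left-hand side as a contour integral that extracts the mode ${a_i}_{(k)}a_j$ out of a product, by combining the associativity axiom with the definition of the modes from Proposition~\ref{proposition:ModeDefinitionAndDegree}. Throughout, $z_i$ is the integration variable and all remaining coordinates are held fixed, subject to the hypothesis $\varepsilon < |z_l - z_j|$ for $l \neq i,j$.

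First I would use permutation invariance to move the pairs $(a_i,z_i)$ and $(a_j,z_j)$ to the end, writing $\mu(a,z) = \mu((a_l,z_l)_{l\neq i,j}, a_i, z_i, a_j, z_j)$. On the contour $\partial B_\varepsilon(z_j)$ one has $|z_i - z_j| = \varepsilon$ while $|z_l - z_j| > \varepsilon$ for $l \neq i,j$; in particular $z$ stays in~$\C^m\setminus\Delta$ (since $|z_i - z_l| \geq |z_l - z_j| - \varepsilon > 0$), and the associativity condition $\max(|z_i - z_j|, 0) < \min_{l\neq i,j}|z_l - z_j|$ is met, with inner insertions at~$w_1 = z_i - z_j$ and~$w_2 = 0$ and the outer insertion point~$z_{m+1} = z_j$. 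Reading associativity backwards therefore gives, pointwise on the contour,
\[
  \mu(a,z) = \sum_{k'\in\Z} \mu\big((a_l,z_l)_{l\neq i,j},\, p_{k'}\mu(a_i, z_i - z_j, a_j, 0),\, z_j\big)\displayperiod
\]

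Next I would multiply by~$(z_i - z_j)^k$ and integrate over~$z_i \in \partial B_\varepsilon(z_j)$. The two interchanges needed here are the crux of the argument. First, the sum over~$k'$ may be pulled out of the contour integral: in each fixed degree it is locally normal, hence uniformly convergent on the compact contour, so term-by-term integration is valid. Second, the contour integral may be pulled inside the outer multiplication~$\mu((a_l,z_l)_{l\neq i,j},\,{-}\,, z_j)$, which is linear into a subspace that is finite-dimensional in each degree and hence commutes with the integral. After the substitution~$\zeta = z_i - z_j$ the inner integral becomes~$\ootpi\int_{|\zeta|=\varepsilon}\zeta^k\, p_{k'}\mu(a_i,\zeta,a_j,0)\,d\zeta$; by the meromorphicity axiom the integrand is holomorphic on the punctured disk, so the integral is independent of~$\varepsilon$ and equals~$p_{k'}({a_i}_{(k)}a_j)$ by the definition of the mode in Proposition~\ref{proposition:ModeDefinitionAndDegree}.

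Finally, since~${a_i}_{(k)}a_j$ lies in~$\V$, the remaining sum~$\sum_{k'} \mu((a_l,z_l)_{l\neq i,j}, p_{k'}({a_i}_{(k)}a_j), z_j)$ is finite and collapses, by linearity of~$\mu$ in that slot, to~$\mu((a_l,z_l)_{l\neq i,j}, {a_i}_{(k)}a_j, z_j)$; one further application of permutation invariance rewrites this as the right-hand side of the proposition. I expect the main obstacle to be the careful justification of the two interchanges above — in particular, verifying that locally normal convergence in each degree licenses term-by-term integration over the contour, and that the contour integral genuinely commutes with the outer multiplication map.
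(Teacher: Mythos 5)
Your proposal is correct and follows essentially the same route as the paper's proof: expand $\mu(a,z)$ via permutation invariance and associativity, interchange the sum over degrees with the contour integral using normal convergence on the compact contour, pull the integral inside $\mu$ and $p_{k'}$ by linearity and finite-dimensionality in each degree, shift the contour to identify the inner integral with $p_{k'}({a_i}_{(k)}a_j)$, and collapse the resulting finite sum. One minor correction: the holomorphy of $\zeta^k\mu(a_i,\zeta,a_j,0)$ on the punctured plane (which gives the contour independence) is immediate from $\mu$ taking values in $\mathcal{O}(\C^2\setminus\Delta;\Vbar)$, not from the meromorphicity axiom, which concerns the nature of the singularity at $\zeta=0$ and is not needed here.
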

\begin{proof}
  Using permutation invariance and associativity
  \begin{align*}
  \mu(a,z) = \sum_{l\in \Z}\mu(\ldots,\widehat{a_i,z_i,}\ldots,p_l\mu(a_i,z_i -z_j,a_j,0),z_j,\ldots)
  \end{align*}
  and this series of functions of~$z_i$ converges normally in each component of~$\Vbar$.
  It follows that we can exchange integration over~$\partial B_{\varepsilon}(z_j)$ with summation to get
  \begin{align*}
    &\ootpi\int_{\partial B_{\varepsilon}(z_j)} (z_i-z_j)^k\mu(a,z) dz_i \\
    &= \sum_{l\in \Z}\mu(\ldots,\widehat{a_i,z_i,}\ldots,p_l\ootpi\int_{\partial B_{\varepsilon}(z_j)} (z_i - z_j)^k\mu(a_i,z_i -z_j,a_j,0)dz_i,z_j,\ldots)dz_i. 
  \end{align*}
  Here, we may move the integral into the argument of~$\mu$ and~$p_l$ because these maps are linear and the relevant functions take values in finite-dimensional subspaces.
  Shifting the contour integral to zero and noting that it does not matter whether we integrate around a circle of radius~$\varepsilon$ or~$1$ as in the definition of the modes, we get
  \begin{align*}
    \ootpi\int_{\partial B_{\varepsilon}(z_j)} (z_i - z_j)^k\mu(a_i,z_i -z_j,a_j,0)dz_i &= \ootpi\int_{\partial B_{\varepsilon}(0)} w^k\mu(a_i,w,a_j,0)dw \\
    &= {a_i}_{(k)} a_j
  \end{align*}
  and thus
  \begin{align*}
    &\ootpi\int_{\partial B_{\varepsilon}(z_j)} (z_i-z_j)^k\mu(a,z) dz_i = \sum_{l\in \Z}\mu(\ldots,\widehat{a_i,z_i,}\ldots,p_l{a_i}_{(k)} a_j,z_j,\ldots) \\
    &= \mu(\ldots,\widehat{a_i,z_i,}\ldots,\sum_{l \in \Z}p_l{a_i}_{(k)} a_j,z_j,\ldots) = \mu(\ldots,\widehat{a_i,z_i,}\ldots,p_l{a_i}_{(k)} a_j,z_j,\ldots)
  \end{align*}
  because the sum is finite.
\end{proof}
For~$i,j \in \{1,\ldots,m\}$ with~$i < j$, 
let~$U_{ij}$ be the set of~$z \in \C^m \setminus \Delta$ with~$|z_i - z_j| < \min_{l \neq i,j} |z_l - z_j|$.
The following proposition is the analogue of Proposition~5.3.6 from~\cite[p. 167]{CostelloGwilliam} for geometric vertex algebras.
It expresses products for~$z_i$ close to~$z_j$ in terms of a series in~$z_i -z_j$ and other vertex algebra elements inserted at~$z_j$ and goes by the name of \emph{operator product expansion (OPE)}.
\begin{proposition}\label{proposition:OPEforGeometricVertexAlgebras}
  Let~$\V$ be a geometric vertex algebra and let~$a_1,\ldots,a_m \in \V$.
  For~$z \in U_{ij}$
  \[
    \mu(a_1,z_1,\ldots, a_m,z_m) = \sum_{k \in \Z} \mu(\ldots, \widehat{a_i,z_i,} \ldots, {a_i}_{(k)}a_j,z_j, \ldots)(z_i - z_j)^{-k-1}
  \]
  with normal convergence. 
\end{proposition}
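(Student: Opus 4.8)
The plan is to recognize the OPE as the statement that the associativity sum, after using permutation invariance to bring $a_i$ and $a_j$ together and expanding the inner product in a Laurent series in $z_i - z_j$, reassembles into the desired series. The key observation is that the modes ${a_i}_{(k)}a_j$ are precisely the Laurent coefficients of the inner multiplication $\mu(a_i, z_i - z_j, a_j, 0)$, by the proposition identifying $L[w \mapsto \mu(a_i, w, a_j, 0)](x)$ with $\sum_k {a_i}_{(k)}a_j\, x^{-k-1}$.

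First I would use permutation invariance to write $\mu(a_1, z_1, \ldots, a_m, z_m)$ in a form where $a_i$ is adjacent to the slot at $z_j$, and then apply the associativity axiom with the single inner factor $b_1 = a_i$ inserted at $w_1 = z_i - z_j$ (so that $w_1 + z_j = z_i$) and the remaining $a_l$ as the outer factors, the last outer slot being at $z_j$. The hypothesis $z \in U_{ij}$, i.e. $|z_i - z_j| < \min_{l \neq i,j}|z_l - z_j|$, is exactly the ball condition $\max |w| < \min |z_\cdot - z_j|$ needed to invoke associativity. This yields
\[
  \mu(a_1,z_1,\ldots,a_m,z_m) = \sum_{k \in \Z} \mu(\ldots,\widehat{a_i,z_i,}\ldots, p_k\mu(a_i, z_i - z_j, a_j, 0), z_j, \ldots)
\]
with locally normal convergence in $z$ guaranteed directly by the associativity axiom.

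The remaining step is to re-expand the inner factor $\mu(a_i, z_i - z_j, a_j, 0)$ as its Laurent series in the variable $z_i - z_j$. Since this is a meromorphic $\Vbar$-valued function on an annulus (meromorphicity axiom), its Laurent expansion converges, and by the earlier proposition its coefficient of $(z_i - z_j)^{-k-1}$ is exactly ${a_i}_{(k)}a_j \in \V$. Substituting and using linearity of $\mu$ in the slot at $z_j$ together with the fact that each ${a_i}_{(k)}a_j$ lands in a single graded piece, I would interchange the $k$-summation with the outer $\mu$ to arrive at the claimed formula.

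The main obstacle is bookkeeping the two different convergence claims and ensuring they are compatible: the associativity sum converges locally normally in the index $k$ of the degree-projection $p_k$, whereas the final series is indexed by the Laurent exponent $k$ of $z_i - z_j$. I would need to verify that re-indexing the associativity sum through the Laurent expansion of the inner product preserves locally normal convergence — concretely, that on a compact subset of $U_{ij}$ the supremum norms of the terms $\mu(\ldots, {a_i}_{(k)}a_j, z_j, \ldots)(z_i - z_j)^{-k-1}$, in each fixed output degree, are summable. This follows because in each output degree only finitely many $k$ contribute (by the grading, ${a_i}_{(k)}a_j$ has degree $|a_i|-k-1+|a_j|$, so the output degree pins down $k$ up to the finitely many choices of degrees of the $a_l$), reducing the locally normal convergence to the termwise convergence of a finite sum of holomorphic functions, which is automatic.
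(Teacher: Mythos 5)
Your overall route is sound and genuinely different from the paper's: the paper fixes all variables except~$z_i$, Laurent-expands the holomorphic function~$z_i \mapsto \mu(a,z)$ on the punctured ball around~$z_j$, and identifies the Laurent coefficients with~$\mu(\ldots,\widehat{a_i,z_i,}\ldots,{a_i}_{(k)}a_j,z_j,\ldots)$ via the contour-integral computation of Proposition~\ref{proposition:ModesInGeometricVertexAlgebra}; you instead apply associativity directly (your first display is in fact the opening step of the paper's proof of that proposition) and then re-expand the inner two-point function through its Laurent series, avoiding contour integrals and the integral/sum interchange entirely. However, your final convergence argument contains a genuine error: the claim that \emph{in each output degree only finitely many~$k$ contribute} is false, as is the parenthetical reasoning behind it. That reasoning presumes the output degree of~$\mu(\ldots,{a_i}_{(k)}a_j,z_j,\ldots)$ is determined by the degrees of its inputs, but~$\mu$ is not graded in this sense: $\C^\times$-equivariance rescales the insertion points along with the degrees, and already the one-point function~$\mu(c,w) = e^{wT}c$ has nonzero components in every degree~$|c|+n$ with~$T^n c \neq 0$. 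Concretely, in the (geometric) Heisenberg vertex algebra take~$(a_1,a_2,a_3)=(\alpha,\alpha,\vac)$ and~$(i,j)=(2,3)$: then~$\alpha_{(k)}\vac \neq 0$ for every~$k \leq -1$, and~$p_0\mu(\alpha,z_1,\alpha_{(k)}\vac,z_3) \neq 0$ for every~$k\leq -1$, since its expansion contains the monomial~$\alpha_{(-k)}\alpha_{(k)}\vac\, z_1^{k-1} = (-k)\vac\, z_1^{k-1}$, which nothing can cancel. So infinitely many summands are nonzero in output degree~$0$, and your reduction to a finite sum collapses.

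Fortunately the gap closes easily, using an observation you state but do not exploit: by Proposition~\ref{proposition:ModeDefinitionAndDegree}, for homogeneous~$a_i,a_j$ the coefficient~${a_i}_{(k')}a_j$ is homogeneous of degree~$|a_i|+|a_j|-k'-1$, so the inner Laurent expansion contributes \emph{exactly one} term to each degree projection:
\[
  p_k\mu(a_i,z_i-z_j,a_j,0) = {a_i}_{(k')}a_j\,(z_i-z_j)^{-k'-1}, \qquad k' = |a_i|+|a_j|-k-1 \displayperiod
\]
Pulling the scalar~$(z_i-z_j)^{-k'-1}$ out of the~$z_j$-slot by linearity, the summand of the associativity sum indexed by~$k$ is therefore \emph{equal}, as a~$\Vbar$-valued function on~$U_{ij}$, to the summand of the claimed OPE series indexed by~$k'$. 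Since~$k \mapsto k'$ is a bijection of~$\Z$, and (locally) normal convergence is absolute summability of sup-norms over the index set — hence invariant under bijective re-indexing — the OPE series converges locally normally with the same sum~$\mu(a,z)$; joint local normality in all of~$z$ comes straight from the associativity axiom, since the substitution~$z \mapsto \bigl((z_l)_{l\neq i},(z_i-z_j,0)\bigr)$ carries compact subsets of~$U_{ij}$ into compacta of the associativity domain. The case of general~$a_i,a_j$ follows by bilinearity, a finite sum of locally normal series being locally normal. No finiteness statement is needed anywhere; with this replacement your proof is complete.
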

\begin{proof}
  For fixed~$z_1,\ldots,z_{i-1},z_{i+1},\ldots,z_m$ the left-hand side is a holomorphic function of~$z_i$ such that~$z\in U_{ij}$.
  The integral formula for the coefficients of the normally convergent Laurent expansion and Proposition~\ref{proposition:ModesInGeometricVertexAlgebra} imply
  \begin{align*}
    \mu(a,z) &= \sum_{k\in \Z} \ootpi \int_{\partial B_\varepsilon(z_j)} (w-z_j)^k\mu(\ldots, a_i,w, \ldots,a_j,z_j, \ldots) dw (z_i - z_j)^{-k-1} \\
    &= \sum_{k\in \Z} \mu(\ldots,\widehat{a_i,z_i,}\ldots,{a_i}_{(k)}a_j,z_j,\ldots)(z_i - z_j)^{-k-1}.
  \end{align*}
\end{proof}
\begin{proposition}\label{proposition:VertexAlgebraFromGeometricVertexAlgebra}
  If~$(\V,\mu)$ is a geometric vertex algebra, then~$(\V,Y_\mu,T_\mu,\vac_\mu)$ is a~$\Z$-graded vertex algebra.
\end{proposition}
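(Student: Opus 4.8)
The plan is to verify each of the four vertex algebra axioms — locality, translation, creation, and vacuum — for the data $Y$, $T$, and $\vac$ defined in Theorem~\ref{theorem:GeometricVertexAlgebrasAndVertexAlgebrasDetailedVersion}. First I must check that $Y$ actually lands in $\End(\V)[[x^{\pm 1}]]$ with modes of the correct degree; but this is exactly the content of Proposition~\ref{proposition:ModeDefinitionAndDegree}, which shows $a_{(k)}$ is a well-defined endomorphism of $\V$ homogeneous of degree $|a|-k-1$, and the preceding proposition identifies $Y(a,x)b$ with $\sum_l a_{(l)}b\, x^{-l-1}$. So the modes and their degrees come for free.

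For the \emph{vacuum} axiom I would use the consequence of associativity noted after the definition of a geometric vertex algebra, namely $\mu(a_1,z_1,\ldots,a_m,z_m,\vac,z_{m+1}) = \mu(a_1,z_1,\ldots,a_m,z_m)$. Applied with $m=0$ this should give $\mu(\vac,z,b,0)=\mu(b,0)=b$ as a constant function of $z$, whose Laurent expansion is $b$, i.\,e. $Y(\vac,x)=\id_\V$. For \emph{creation} I would compute the Laurent expansion of $\mu(a,z,b,0)$ directly: the meromorphicity axiom guarantees that only finitely many negative powers of $z$ appear, so $Y(a,x)b \in \V[[x^{\pm 1}]]$ has a bottom term, and evaluating $\mu(a,0,b,0)$ is problematic (it lies on $\Delta$), so instead I would extract the $x^0$-coefficient and show $a_{(-1)}b \to a$ as $b\to\vac$; more precisely, $Y(a,x)\vac$ should have Laurent expansion with nonnegative powers only and constant term $a$, which follows from insertion at zero $\mu(a,0)=a$ together with permutation invariance relating $\mu(a,z,\vac,0)$ to $\mu(a,z)$. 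For \emph{translation}, the definition $Ta=\partial_z\mu(a,z)|_{z=0}$ together with $\C^\times$-equivariance should yield the degree and $T\vac=0$, and the commutator relation $[T,Y(a,x)]=\partial_x Y(a,x)$ should follow from differentiating the translation-invariance encoded in the $m=0$ associativity, i.\,e. from Proposition~\ref{proposition:TranslationsActOnVbarbb} describing how $\C$ acts by translations via $\mu$.

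The main obstacle will be \emph{locality}. The natural strategy is to use the operator product expansion of Proposition~\ref{proposition:OPEforGeometricVertexAlgebras}, which expresses $\mu(a,z_1,b,z_2,c,0)$ for $z_1$ near $z_2$ as a locally normal series in $(z_1-z_2)$, and to combine it with permutation invariance: since $\mu$ depends only on the unordered set of labeled points, the functions obtained by expanding first around $z_1$ and then around $z_2$, versus the reverse, are restrictions of one and the same holomorphic function $\mu(a,z_1,b,z_2,c,0)$ on $\C^3\setminus\Delta$ to two different regions. The product $Y(a,x)Y(b,y)c$ is the Laurent expansion in the region $|z_1|>|z_2|$ and $Y(b,y)Y(a,x)c$ the expansion in $|z_2|>|z_1|$; both are expansions of the \emph{same} meromorphic function, which by the meromorphicity axiom has poles only along $z_1=z_2$ (and the axes) of bounded order. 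Multiplying by a high enough power $(x-y)^N$ clears the pole along the diagonal, so the two expansions agree, giving $(x-y)^N[Y(a,x),Y(b,y)]c=0$. The delicate point is to justify that meromorphicity of the two-point function $\mu(a,z,b,0)$ propagates to a bound on the order of the pole of the three-point function along the diagonal uniformly in $c$, and to track which formal expansions of the rational/meromorphic kernel correspond to which operator ordering; I expect this bookkeeping, rather than any analytic difficulty, to be the crux of the argument.
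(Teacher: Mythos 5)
Your overall plan and your locality strategy coincide with the paper's, but two steps are left in a state where the proof does not yet close. For locality, the point you flag as ``the delicate point'' is exactly the missing step, and it is not mere bookkeeping: the locality axiom requires a single $N$ working for all $c$, and this uniformity is supplied by combining Proposition~\ref{proposition:OPEforGeometricVertexAlgebras} with meromorphicity as follows. Meromorphicity of the two-point function says precisely that $a_{(k)}b=0$ for all $k\geq N$ with $N=N(a,b)$ depending only on $a$ and $b$ (since $\mu(a,z,b,0)=\sum_k a_{(k)}b\,z^{-k-1}$). Applying Proposition~\ref{proposition:OPEforGeometricVertexAlgebras} near the diagonal gives $\mu(a,z,b,w,c,0)=\sum_k \mu(a_{(k)}b,w,c,0)\,(z-w)^{-k-1}$, so the pole along $z=w$ has order at most $N(a,b)$, uniformly in $c$; hence $(z-w)^{N}\mu(a,z,b,w,c,0)$ extends holomorphically across the diagonal and your expansion argument goes through. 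You should also note that identifying $Y(a,x)Y(b,y)c$ (resp.\ $Y(b,y)Y(a,x)c$) with the Laurent expansion of the three-point function in the region $|z|>|w|$ (resp.\ $|w|>|z|$) itself requires applying Proposition~\ref{proposition:OPEforGeometricVertexAlgebras} twice; this is how the paper does it.

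The translation axiom is a genuine gap. Differentiating the $m=0$ case of associativity (Proposition~\ref{proposition:TranslationsActOnVbarbb}) only yields $T\mu(a,z,b,0)=\partial_w\mu(a,z+w,b,w)|_{w=0}$; to get $[T,Y(a,x)]b=\partial_xY(a,x)b$ you must in addition identify $\mu(a,z,Tb,0)$ with $\partial_w\mu(a,z,b,w)|_{w=0}$, i.e.\ pass $\sum_k p_k$ and $\partial_w$ through $\mu(a,z,\,\cdot\,,0)$, and this interchange is justified by the locally normal convergence in the \emph{general} ($m=1$, $n=1$) associativity axiom, which Proposition~\ref{proposition:TranslationsActOnVbarbb} does not provide. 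Moreover, that proposition concerns only $\Vbarbb$, and for a general geometric vertex algebra nothing guarantees that $\mu(a,z,b,0)$ or $\mu(b,w)$ lies in $\Vbarbb$ (boundedness below is established only for the geometric vertex algebras built from vertex algebras in Section~\ref{section:FromVertexAlgebrasToGeometricVertexAlgebras}); the paper instead extends $T$ to all of $\Vbar$ degreewise. Two smaller slips of the same kind: $T\vac=0$ does not follow from $\C^\times$-equivariance (equivariance only puts $T\vac$ in $\V_1$) but from the vacuum-insertion consequence of associativity, $\mu(\vac,z)=\mu(\emptyset)=\vac$; and in your creation argument, $\mu(a,z,\vac,0)=\mu(a,z)$ is again a consequence of associativity, not of permutation invariance, while the phrase ``$a_{(-1)}b\to a$ as $b\to\vac$'' has no meaning here since no topology on $\V$ is in play.
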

\begin{proof}
  We determine the degrees of~$T_\mu$ and~$\vac_\mu$  using the action of~$\C^\times$ and the equivariance axiom similarly to how we determined the degrees of the modes in Proposition~\ref{proposition:ModeDefinitionAndDegree}:
  For all~$z\in \C^\times$,
  \[
    z.\vac_\mu = z. \mu(\emptyset) = \mu(\emptyset) = \vac_\mu
  \]
  so~$\vac\in \V_0 \subseteq \Vbar$ since this shows that~$p_l\vac = 0$ for~$l\neq 0$.
  For all~$z\in \C^\times$ and~$a\in \V$ homogeneous, we have
  \begin{align*}
    z.T_\mu a &= z.\partial_w\mu(a,w)|_{w=0} = \partial_wz.\mu(a,w)|_{w=0} = \partial_w \mu(z.a,zw)|_{w=0} \\ 
    &= \partial_w \mu(z^{|a|}a,zw)|_{w=0} = z^{|a|+1}\partial_w\mu(a,w)|_{w=0} = z^{|a|+1} T_\mu a
  \end{align*} 
  so~$T_\mu a$ is concentrated in degree~$|a|+1$.
  It follows that the image of~$T_\mu$ is a subset of~$\V$, and~$T_\mu$ has degree~1.

  \paragraph{Locality:}
  Let~$a,b,c \in \V$.
  It suffices to treat the case of~$a,b,c$ homogeneous.
  Applying Proposition~\ref{proposition:OPEforGeometricVertexAlgebras} twice, we find
  \begin{align*}
    \mu(a,z,b,w,c,0) &= \sum_l \mu(a,z,b_{(l)}c\ w^{-l-1},0) = \sum_l \sum_k a_{(k)}b_{(l)}c\ z^{-k-1}w^{-l-1} 
  \end{align*}
  with normal convergence for~$|z| > 1 > |w| > 0$.
  Let
\[
A_{r,R}= \{ z \in \C \mid r < |z| < R \}
\]
for~$0 \leq r,R \leq \infty$.
  Using a similar notation for the Laurent expansion of functions of several variables on products of annuli, we get
  \begin{align*}
    L_{(z,w) \in A_{1,2} \times A_{0,1}} \mu(a,z,b,w,c,0) (x,y) = Y_\mu(a,x)Y_\mu(b,y)c
  \end{align*}
  and analogously
  \begin{align*}
   L_{(z,w) \in A_{0,1} \times A_{1,2}} \mu(a,z,b,w,c,0) (x,y) = Y_\mu(b,y)Y_\mu(a,x)c.
  \end{align*}
  For all~$N\in \N$
  \begin{align*}
    L_{(z,w) \in A_{1,2} \times A_{0,1}} (z-w)^ N \mu(a,z,b,w,c,0) (x,y) &= (x-y)^NY_\mu(a,x)Y_\mu(b,y)c \\
    L_{(z,w) \in A_{0,1} \times A_{1,2}} (z-w)^N \mu(a,z,b,w,c,0) (x,y) &= (x-y)^NY_\mu(b,y)Y_\mu(a,x)c
  \end{align*}
  because Laurent expansion intertwines the action of polynomials as functions with the action of polynomials as formal polynomials. 
  For~$N$ large enough, the function
  \(
    (z-w)^N\mu(a,z,b,w,c,0) 
  \)
  of~$(z,w) \in A_{0,2} \times A_{0,2} \setminus \Delta$ extends to a holomorphic function of~$(z,w) \in A_{0,2}\times A_{0,2}$ as a consequence of Proposition~\ref{proposition:OPEforGeometricVertexAlgebras} about the OPE and meromorphicity. 
  If~$F \in \mathcal{O}(A_{0,2} \times A_{0,2})$, then~$L_{A_{1,2} \times A_{0,1}} F = L_{A_{0,1} \times A_{1,2}}F$ in~$\C[[x^{\pm 1}, y^{\pm 1}]]$, 
  and therefore
  \begin{align*}
    (x-y)^N Y_\mu(a,x)Y_\mu(b,y)c &= (x-y)^N Y_\mu(b,y)Y_\mu(a,x)c
  \end{align*}
  in~$\V[[x^{\pm 1}, y^{\pm 1}]]$. 
  \paragraph{Vacuum:}
  Since
  \[
    \mu(\vac_\mu,z,a,0) = \mu(\mu(\emptyset),z,a,0) = \mu(a,0) = a
  \]
  by the definition of~$\vac_\mu$, associativity, and insertion at zero, we have that
  \begin{align*}
    {\vac_\mu}_{(k)} a= \ootpi \int_{S^1} z^k \mu(\vac_\mu,z,a,0) dz = \ootpi\int_{S^1} z^k a dz = \delta_{k,-1} a   
  \end{align*}
  for all~$a\in \V$ and~$k\in \Z$.
  \paragraph{Creation:}
  Let~$a\in \V$.
  By associativity
  \(
    \mu(a,z,\vac_\mu,0) = \mu(a,z)
  \)
  and this is a holomorphic function of~$z\in \C$.
  Therefore, its Laurent expansion has no negative powers and its zeroth term is~$\mu(a,0) = a$ because of the axiom about insertion at zero.
  \paragraph{Translation:}
  Let~$T_\mu:\Vbar \rightarrow \Vbar$ be defined by~$T_\mu x = \sum_k T_\mu p_kx$. This sum is finite in each degree, and~$T_\mu$ is linear.
  Let~$a,b\in \V$.
  The identity
  \[
    [T_\mu,Y_\mu(a,x)]b = T_\mu Y_\mu(a,x)b - Y_\mu(a,x)T_\mu b = \partial_xY_\mu(a,x)b.
  \] 
  is implied by 
  \begin{align}
    T_\mu \mu(a,z,b,0) - \mu(a,z,T_\mu b,0) = \partial_z\mu(a,z,b,0) \label{display:TranslationSubgoal}
  \end{align}
  because Laurent expansion is compatible with linear maps and with differentiation.
  Recall that~$T_\mu c = \partial_w  \mu(c,w)|_{w=0}$ for~$c\in \V$.  
  We may assume that~$w$ is close to zero to compute the~$w$-derivative. 
  \begin{align}
    \mu(a,z,T_\mu b,0) &=  \mu(a,z,\sum_{k}p_kT_\mu b,0) \label{display:muT1}\\ 
                  &= \sum_k\mu(a,z,p_k\partial_w \mu(b,w)|_{w=0},0)\label{display:muT2}\\ 
                  &= \sum_k \partial_w\mu(a,z,p_k\mu(b,w),0)|_{w=0} \label{display:muT3} \\
                  &= \left[\partial_w\sum_k \mu(a,z,p_k\mu(b,w),0)\right]_{w=0} \label{display:muT4}\\
                  &= \partial_w \mu(a,z,b,w)|_{w=0} \label{display:muT5}
  \end{align}
Equation~(\ref{display:muT5}) follows from associativity which implies that the sum in~(\ref{display:muT4}) is normally convergent, and this implies that we can commute the sum and the derivative in Equation~(\ref{display:muT4}), so the sum in~(\ref{display:muT3}) converges normally.
Equation~(\ref{display:muT3}) uses~$p_k$ is linear and that~$\mu$ is linear in each argument from~$\V$. 
The sums in~(\ref{display:muT1}) and~(\ref{display:muT2}) are finite. 
Similarly,
  \begin{align*}
    T_\mu \mu(a,z,b,0) &=  \sum_k\partial_w\mu(p_k\mu(a,z,b,0),w)|_{w=0}\\ 
                  &= \partial_w\sum_k\mu(p_k\mu(a,z,b,0),w)\big|_{w=0}\\  
                  &= \partial_w\mu(a,z+w,b,w)|_{w=0} 
  \end{align*}
  and thus
  \begin{align*}
    T_\mu \mu(a,z,b,0) - \mu(a,z,T_\mu b,0) &= \partial_w\mu(a,z+w,b,w)|_{w=0} - \partial_w \mu(a,z,b,w)|_{w=0}\\
                                  &= \partial_w\mu(a,z+w, b,0)|_{w=0}\\
                                  &= \partial_z\mu(a,z,b,0),
  \end{align*}
  which is Equation~(\ref{display:TranslationSubgoal}).
\end{proof}
The following proposition expresses~$\mu(a,z)$ in terms of the modes.
\begin{proposition}\label{proposition:MuInTermsOfModes}
  If~$(\V,\mu)$ is a geometric vertex algebra, then
  \begin{align*}
   \mu(a_1,z_1,\ldots,a_m,z_m) = \sum_{k\in\Z^m}{a_1}_{(k_1)}\ldots{a_m}_{(k_m)}\vac_\mu z^{-k_1-1}_1\ldots z^{-k_m-1}_m
  \end{align*}
  for~$a_1,\ldots,a_m \in \V$ and~$z\in \C^m \setminus\Delta$ with~$|z_1| > \ldots > |z_m|$.
\end{proposition}
\begin{proof}
  For~$m=0$, this says that~$\mu(\emptyset) = \vac_\mu$ which is the definition of~$\vac_\mu$.
We repeatedly apply Proposition~\ref{proposition:OPEforGeometricVertexAlgebras} to deduce that
  \begin{align*}
    \mu(a,z) &= \mu(a_1,z_1,\ldots,a_m,z_m, \vac_\mu, 0)\\
  &=\sum_{k_m\in\Z}\ldots\sum_{k_1\in\Z} {a_1}_{(k_1)}\ldots{a_m}_{(k_m)}\vac_\mu z^{-k_1-1}_1\ldots z^{-k_m-1}_m  
  \end{align*}
for all~$a_1,\ldots, a_m \in \V$ and~$z\in \C^m \setminus\Delta$ with~$|z_1| > \ldots > |z_m|$. This holomorphic function of such~$z$ has the same Laurent expansion as the right-hand side in the claim.
\end{proof}
\section{From Vertex Algebras to Geometric Vertex Algebras}\label{section:FromVertexAlgebrasToGeometricVertexAlgebras}
\begin{proposition}\label{proposition:VanishingOfModes}
  Let~$\V$ be a~$\Z$-graded vertex algebra. For all~$a,b \in \V$, there exists an integer~$N$ such that 
  \(
    a_{(n)} b = 0
  \)
  if~$n \geq N$.
\end{proposition}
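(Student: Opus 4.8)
The plan is to extract the statement from the locality axiom, since locality is the only hypothesis that constrains the modes $a_{(n)}$ for large $n$. The first thing to note is what does \emph{not} work: if $a,b$ are homogeneous then $a_{(n)}b\in\V_{|a|+|b|-n-1}$, whose degree tends to $-\infty$ as $n\to+\infty$, so one is tempted to conclude vanishing for degree reasons. But a $\Z$-graded vertex algebra need not be bounded below, so this argument fails in general; indeed boundedness below is exactly the hypothesis that makes the analogous statement immediate in Proposition~\ref{proposition:MeromorphicIfBoundedBelow}. Hence locality is essential, and the grading alone is of no help.

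Concretely, I would fix $a,b\in\V$ and use locality to choose a natural number $N$ with $(x-y)^N[Y(a,x),Y(b,y)]=0$ in $\End(\V)[[x^{\pm1},y^{\pm1}]]$, then apply this identity to $\vac$ to obtain
\[
  (x-y)^N\,Y(a,x)Y(b,y)\vac \;=\; (x-y)^N\,Y(b,y)Y(a,x)\vac
\]
in $\V[[x^{\pm1},y^{\pm1}]]$. Both compositions are legitimate because $x$ and $y$ are independent formal variables, so every fixed monomial coefficient is a single element of $\V$ and no infinite sums occur. The key is then to play the two sides against each other. On the right, the creation axiom gives $Y(a,x)\vac=\sum_{m\ge0}a_{(-m-1)}\vac\,x^m\in\V[[x]]$, so $(x-y)^N Y(b,y)Y(a,x)\vac$ involves only non-negative powers of $x$; hence the left-hand side does too. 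On the left, $Y(b,y)\vac=\sum_{m\ge0}b_{(-m-1)}\vac\,y^m$ is a power series in $y$ with constant term $b_{(-1)}\vac=b$, so $Y(a,x)Y(b,y)\vac$ is a power series in $y$ whose $y^0$-coefficient is $Y(a,x)b$, while the $y^0$-coefficient of $(x-y)^N$ is $x^N$; thus the $y^0$-coefficient of the left-hand side is $x^N Y(a,x)b$. Since the whole left-hand side has only non-negative powers of $x$, so does this coefficient, giving $x^N Y(a,x)b\in\V[[x]]$, i.e.\ $Y(a,x)b$ has no power of $x$ below $x^{-N}$. Reading off coefficients in $Y(a,x)b=\sum_n a_{(n)}b\,x^{-n-1}$, this says exactly $a_{(n)}b=0$ for $n\ge N$.

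I expect the main obstacle to be conceptual rather than computational: recognizing that one must feed the locality identity the vacuum and then exploit the asymmetry between the two sides, one being manifestly a power series in $x$ (from creation applied to $Y(a,x)\vac$) and the other a power series in $y$ (from creation applied to $Y(b,y)\vac$). Once this setup is in place, the only points requiring care are that the compositions $Y(a,x)Y(b,y)\vac$ are well defined in the formal sense and that it is legitimate to compare $x$-powers inside a fixed $y$-coefficient; both are immediate because the coefficients involve independent variables. As a bonus, the $N$ produced is the locality bound itself.
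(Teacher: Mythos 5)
Your proof is correct and follows essentially the same route as the paper: apply locality to the vacuum, use creation for~$a$ to see the right-hand side has no negative powers of~$x$, and use creation for~$b$ to extract the~$y^0$-coefficient (the paper phrases this as setting~$y=0$), yielding~$x^N Y(a,x)b \in \V[[x]]$ and hence~$a_{(n)}b=0$ for~$n\geq N$. Your opening remark about the failure of the degree argument in the unbounded case also matches the paper's own comment preceding its proof.
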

This follows for degree reasons in the bounded below case.
The proof given below does not use this assumption and serves a warm-up for the construction of~$\mu_Y$.
\begin{proof}
  Let~$a,b\in \V$.
  By the locality axiom, there exists an~$N$ such that
  \begin{align}
  (x-y)^N Y(a,x)Y(b,y)\vac = (x-y)^N Y(b,y)Y(a,x)\vac\displayperiod \label{display:Field}
  \end{align}
  By the creation axiom for~$b$, we may set~$y=0$ on the left-hand side and get
  \[
  x^N Y(a,x) b = \sum_n a_{(n)} b x^{N-n-1}\displayperiod
  \]
  Using Equation~(\ref{display:Field}) and the creation axiom for~$a$ on the right-hand side, it follows that the left-hand side\ contains no negative powers of~$x$, even after setting~$y=0$.
  This means that~$a_{(n)} b = 0$ for~$N - n -1 < 0$, equivalently~$n \geq N$.
\end{proof}
\begin{proposition}\label{proposition:ProductOfYs}
  Let~$(\V,Y,T,\vac)$ be a~$\Z$-graded vertex algebra.
  Fix an integer~$m\geq 0$. For~$a_1,\ldots,a_m \in \V$ and~$z \in \C^m \setminus\Delta$ the sums
  \begin{align}
  Y(a_1,z_1)\ldots Y(a_m,z_m)\vac &:= \sum_{k_1 \in \Z} {a_1}_{(k_1)}z^{-k_1-1}_{1}\ldots \sum_{k_m \in \Z} {a_m}_{(k_m)}z^{-k_m-1}_{m} \vac\label{equation:ProductOfYs}\\
  &=\sum_{k \in \Z^m} {a_1}_{(k_1)}z^{-k_1-1}_{1}\ldots {a_m}_{(k_m)}z^{-k_m-1}_{m} \vac \label{equation:ProductOfYsSingleSum}
  \end{align}
  converge normally in each component of~$\Vbar$
  for~$z\in D_m$, where
  \[
    D_m := \{ z\in \C^m \mid |z_1| > \ldots > |z_m| \} \subseteq \C^m \setminus\Delta\displayperiod
  \]
  As a function of~$z$, the value of this series extends to a unique holomorphic $\Vbar$-valued function
  \[
    \mu_Y(a,z) = \mu_Y(a_1,z_1,\ldots, a_m,z_m)
  \]
  of~$z\in \C^m \setminus\Delta$. 
  It satisfies~$\mu_Y(a^\sigma, z^\sigma) = \mu_Y(a,z)$ for every permutation~$\sigma \in \Sigma_m$.
  Furthermore,~$\mu_Y(a)$ is identically zero in sufficiently low degree for every~$a \in \V^{\otimes m}$. 
\end{proposition}
This proposition and its proof are very similar to what is found in~\cite[1.2 and 4.5]{FrenkelBenZvi},~\cite[5.3]{HuangTwoDimConfGeomAndVOAs},~\cite[A.2]{FrenkelLepowskyMeurman},~\cite[3.5.1]{FrenkelHuangLepowsky}, which treat the bounded below, degreewise finite-dimensional case in these parts.
\begin{proof}
  We may assume that~$a_1,\ldots, a_m$ are homogeneous.
  Let
\[f(a,x) = \sum_{k \in \Z^m} {a_1}_{(k_1)} \ldots {a_m}_{(k_m)} \vac x^{-k_1-1}_{1}\ldots x^{-k_m-1}_{m} \in \V[[x^{\pm 1}_1,\ldots x^{\pm 1}_m]] \]
  denote the formal series corresponding to the sum in Equation~(\ref{equation:ProductOfYsSingleSum}), whose proof we defer until the end of this proof. 
  By the creation axiom for~$a_m$, there are no negative powers of~$x_m$ in~$f(a,x)$.
  The locality axiom for~$a_i$ and~$a_j$ implies that there is a natural number~$N_{ij}$ with
  \[
   (x_i - x_j)^{N_{ij}} [Y(a_i, x_j),Y(a_j,x_j)] = 0
  \]
  for~$1 \leq i < j \leq m$.
  Let
  \begin{align}
    g(x) = \prod_{i<j} (x_i - x_j)^{N_{ij}} \label{equation:ProductOfYsAbsolutelyConvergent:gDefinition}
  \end{align}
  and it follows that
  \begin{align}
    g(x)f(a^\sigma, x^\sigma) = g(x) f(a,x) \label{equation:ExistenceOfMuRequirementFor_g}
  \end{align}
  for all permutations~$\sigma \in \Sigma_m$. This can be checked by considering the special case of~$\sigma$ a transposition.
  Equation~\ref{equation:ExistenceOfMuRequirementFor_g} and the creation axiom for~$a_i$ imply that there are no negative powers of~$x_i$ in~$g(x)f(a,x)$ for any~$i=1,\ldots,m$.
Let~$l\in \Z$.
The~$\V_l$-component~$p_l(g(x)f(a,x))$ has no negative powers of~$x_i$ for any~$i=1,\ldots,n$.
We claim that~$p_l(g(x)f(a,x))$ is a polynomial in the~$x_i$ with coefficients in~$\V_l$.
Let~$r_\beta \in \Z$ be the coefficient of~$x^\beta$ in~$g(x)$.
The coefficient of~$x^\alpha$ in 
\begin{align}
 p_l(g(x)f(a,x)) = \sum_{\alpha \in \Z^m} \left( \sum_{\beta + k = \alpha} r_\beta p_l({a_1}_{(k_1)}\ldots {a_m}_{(k_m)}\vac) \right) x^\alpha \label{display:ConstructionOfMu:SumOfMonomials}
\end{align}
is zero if~$\alpha_i < 0$ for some~$i=1,\ldots,m$. 
Let~$\alpha \in \Z^m$ be such that the coefficient of~$x^\alpha$ is not zero. 
There is at least one pair~$(\beta,k)$ with~$\beta + k = \alpha$ and
\[
r_\beta p_l({a_1}_{(k_1)}\ldots {a_m}_{(k_m)}\vac) \neq 0\displayperiod
\]
This implies that~$r_\beta \neq 0$ and~$p_l({a_1}_{(k_1)}\ldots {a_m}_{(k_m)}\vac) \neq 0$.
Thus~$\beta$ is in a fixed finite subset of~$\Z^m$ depending only on the~$N_{ij}$, and~$k$ satisfies 
\begin{align}
  l = |{a_1}_{(k_1)}\ldots {a_m}_{(k_m)}\vac| = |a_1| - k_1 - 1 + \ldots + |a_m| - k_m - 1\displaycomma \label{equation:lAndDegreeOfABunchOfModes}
\end{align}
so
\begin{align*}
\sum^m_{i=1} \alpha_i = \sum^m_{i} (\beta_i + k_i) = \sum^m_{i} (\beta_i + |a_i|) - n - l.
\end{align*}
For fixed~$\beta$, the set of such~$\alpha$ is a hyperplane intersecting~$\Z^m_{\geq 0}$ in finitely many points. 
It follows that~$\alpha$ is in a finite subset of~$\Z^m$ depending only on the~$N_{ij}$.   
  In summary, each~$\V_l$-component of~$g(x)f(a,x)$ is a polynomial in the~$x_i$ with coefficients in~$\V_l$. 
  Thus
  \[F_a(z) = [g(x)f(a,x)]_{x=z}\] 
  defines a holomorphic function~$F_a:\C^m\rightarrow \Vbar$.
  Let~$A \subseteq D_m$ be a product of annuli centered at~$0$.
  Let
  \[
    L: \mathcal{O}(A;\Vbar) \longrightarrow \prod_k \V_k [[x^{\pm 1}_1,\ldots,x^{\pm 1}_m]]
  \]
  denote the Laurent expansion map on~$A$. 
  The Laurent expansion on~$A$ of holomorphic functions defined on~$D_m$ is independent of the choice of~$A$. 
  In particular, the Laurent expansion~$L(F_a)$ of~$F_a$ on~$A$ is independent of the choice of~$A$ and satisfies~$L(F_a)(x) = g(x)f(a,x)$.
  Then
  \[
    \mu_Y(a_1,z_1,\ldots,a_m,z_m) = g(z)^{-1} F_a(z) \quad\quad (z\in \C^m \setminus\Delta)
  \] defines a holomorphic function on~$\C^m \setminus\Delta$ with values in~$\Vbar$ whose Laurent expansion on~$A$, again independent of the choice of~$A$, is equal to~$f(a,x)$ because 
\begin{align*}
  &L[z \mapsto \mu_Y(a_1,z_1,\ldots, a_n,z_n)](x) = L(F_a/g)(x) = L(1/g)(x) L(F_a)(x) \\
  &= L(1/g)(x) g(x)f(a,x) =  L(1/g)(x) L(g)(x) f(a,x) = L(1)(x) f(a,x) = f(a,x).
\end{align*}
Thus~$f(a,z)$ converges normally for~$z \in A$ and~$f$ and~$\mu_Y$ agree on~$A$.
  For every~$z \in D_m$ there is an~$A$ as above with~$z\in A$, so~$f(a,z)$ converges normally for all~$z\in D_m$ and~$f$ and~$\mu_Y(a)$ agree on all of~$D_m$. 
  The uniqueness of~$\mu_Y(a)$ follows by the identity theorem for analytic functions because~$\C^m \setminus\Delta$ is connected,~$D_m \subseteq \C^m$ is open and non-empty, and~$\mu_Y(a,z) = f(a,z)$ for~$z\in D_m$. 
Note that the identity theorem for holomorphic functions with values in a finite-dimensional vector space applies here because there is a finite-dimensional subspace of~$\V_l$ for each~$l \in\Z$ to which the function~$z\mapsto p_l(\mu_Y(a,z))$ maps holomorphically. 

We now prove invariance under permutation. Let~$\sigma \in \Sigma_m$. We now write~$g_N$ for~$g$ to make the dependence of~$g$ on the~$N_{ij}$ explicit. Let~$\widetilde{N}$ denote a choice like~$N$ for~$a^\sigma$ instead of~$a$ (e.\,g.,~$\widetilde{N}_{ij} = N_{\sigma(i)\sigma(j)}$).
Our goal is the equality of
\begin{align*}
  \mu_Y(a,z) &= [g_N(x) f(a,x)]_{x=z} g_N(z)^{-1}\\
  \intertext{with}
  \mu_Y(a^\sigma,z^\sigma) &= [g_{\widetilde{N}}(x), f(a^\sigma,x)]_{x =z^\sigma} g_{\widetilde{N}}(z^\sigma)^{-1}\\
  &= [g_{\widetilde{N}}(x^\sigma), f(a^\sigma,x^\sigma)]_{x =z} g_{\widetilde{N}}(z^\sigma)^{-1}
\end{align*}
for~$z \in \C^m \setminus \Delta$. 
Starting with Equation~(\ref{equation:ExistenceOfMuRequirementFor_g}), it follows that
\begin{align*}
  g_N(x)f(a,x) &= g_N(x)f(a^\sigma,x^\sigma)\\
  g_{\widetilde{N}}(x^\sigma)g_N(x)f(a,x) &= g_N(x)g_{\widetilde{N}}(x^\sigma)f(a^\sigma,x^\sigma)\\
  [g_{\widetilde{N}}(x^\sigma)g_N(x)f(a,x)]_{x=z} &= [g_N(x)g_{\widetilde{N}}(x^\sigma)f(a^\sigma,x^\sigma)]_{x=z}\\
  g_{\widetilde{N}}(z^\sigma)[g_N(x)f(a,x)]_{x=z} &= g_N(z)[g_{\widetilde{N}}(x^\sigma)f(a^\sigma,x^\sigma)]_{x=z}\\
  [g_N(x)f(a,x)]_{x=z}g_N(z)^{-1} &= [g_{\widetilde{N}}(x^\sigma)f(a^\sigma,x^\sigma)]_{x=z}g_{\widetilde{N}}(z^\sigma)^{-1}\\
  \mu_Y(a,z)&= \mu_Y(a^\sigma,z^\sigma).
\end{align*}
We now prove that~$\mu_Y(a,z)$ is zero in sufficiently low degree. 
It suffices to prove that~$p_l(g(x)f(a,x))$ is zero for~$l$ low enough.
This formal series in the~$x_i$ with coefficients in~$\V_l$ has no negative powers of~$x_i$ for any~$i$.
Therefore, for a monomial in~(\ref{display:ConstructionOfMu:SumOfMonomials}) with non-zero coefficient, the~$-k_i-1$ in Equation~(\ref{equation:lAndDegreeOfABunchOfModes}) are bounded from below, so~$l$ is as well, and the bound is independent of~$z$.

It remains to prove Equation~(\ref{equation:ProductOfYsSingleSum}) and the normal convergence of the sums in Equation~(\ref{equation:ProductOfYs}).
For~$m=0$, there is nothing to show.
Assume that the claim holds for~$m-1$ with~$m\geq 1$. In particular, 
\begin{align*}
&\sum_{k_2,\ldots,k_m \in\Z} {a_2}_{(k_2)} z^{-k_2-1}_2 \ldots {a_m}_{(k_m)} z^{-k_m-1}_m \vac\\
&= \sum_{k_2\in\Z} {a_2}_{(k_2)} z^{-k_2-1}_2 \ldots\sum_{k_m\in\Z} {a_m}_{(k_m)} z^{-k_m-1}_m \vac
\end{align*}
for~$(z_2,\ldots,z_m) \in D_{m-1}$ with all sums normally convergent. Applying~${a_1}_{(k_1)}z^{-k_1-1}_1$ to both sides, which preserves normally convergent sums, we get
\begin{align*}
&\sum_{k_2,\ldots,k_m \in\Z} {a_1}_{(k_1)}z^{-k_1-1}_1 {a_2}_{(k_2)} z^{-k_2-1}_2 \ldots {a_m}_{(k_m)} z^{-k_m-1}_m \vac\\
&= {a_1}_{(k_1)}z^{-k_1-1}_1\sum_{k_2\in\Z} {a_2}_{(k_2)} z^{-k_2-1}_2 \ldots\sum_{k_m\in\Z} {a_m}_{(k_m)} z^{-k_m-1}_m \vac
\end{align*}
for~$z=(z_1,z_2,\ldots,z_m) \in D_m$. Normal convergence of the sum over~$k\in \Z^m$ defining~$f(a,z)$ for~$z \in D_m$ implies that the sums over~$k_1\in\Z$ in
\begin{align*}
&\sum_{k\in\Z^m}{a_1}_{(k_1)}z^{-k_1-1}_1 {a_2}_{(k_2)} z^{-k_2-1}_2 \ldots {a_m}_{(k_m)} z^{-k_m-1}_m \vac\\
&=\sum_{k_1\in\Z}\sum_{k_2,\ldots,k_m \in\Z} {a_1}_{(k_1)}z^{-k_1-1}_1 {a_2}_{(k_2)} z^{-k_2-1}_2 \ldots {a_m}_{(k_m)} z^{-k_m-1}_m \vac\\
&= \sum_{k_1\in\Z} {a_1}_{(k_1)}z^{-k_1-1}_1\sum_{k_2\in\Z} {a_2}_{(k_2)} z^{-k_2-1}_2 \ldots\sum_{k_m\in\Z} {a_m}_{(k_m)} z^{-k_m-1}_m \vac
\end{align*}
are normally convergent, too.
\end{proof}
\begin{proposition}\label{proposition:GeometricVertexAlgebraIsWellDefined}
The pair $(\V,\mu_Y)$ satisfies the axioms of a geometric vertex algebra. 
\end{proposition}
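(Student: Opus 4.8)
The plan is to verify the five axioms for the pair $(\V,\mu)$ supplied by Proposition~\ref{proposition:ProductOfYsAbsolutelyConvergent}, which already furnishes permutation invariance, the holomorphic $\Vbar$-valued nature of $\mu$, and the vanishing of $\mu(a)$ in low degree. All axioms except associativity are short. Insertion at zero is the $m=1$ formula $\mu(a,z)=Y(a,z)\vac=\sum_k a_{(k)}\vac\,z^{-k-1}$: the creation axiom kills the negative powers and leaves value $a$ at $z=0$. For $\C^\times$-equivariance I would work on the chamber $D_m$, where $\mu(a,z)=\sum_{\vec k}(a_1)_{(k_1)}\cdots(a_m)_{(k_m)}\vac\prod_i z_i^{-k_i-1}$; each summand is homogeneous of degree $\sum_i(|a_i|-k_i-1)$, so $\lambda\in\C^\times$ multiplies it by $\lambda^{\sum_i(|a_i|-k_i-1)}$, exactly the factor produced on the other side by $(\lambda.a_i)_{(k_i)}=\lambda^{|a_i|}(a_i)_{(k_i)}$ and $(\lambda z_i)^{-k_i-1}$; since $\lambda D_m=D_m$ the identity theorem propagates the equality to all of $\C^m\setminus\Delta$. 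Meromorphicity is immediate from the construction $\mu(a,z,b,w)=(z-w)^{-N_{12}}F_{a\otimes b}(z,w)$ with $F_{a\otimes b}$ entire, giving $z^{N_{12}}\mu(a,z,b,0)=F_{a\otimes b}(z,0)$.

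The real work is associativity. After reducing to homogeneous $a_i,b_j$ I introduce a scaling variable $s$ and set
\[
  g(s)=\mu(a_1,z_1,\ldots,a_m,z_m,\,b_1,sw_1+z_{m+1},\ldots,b_n,sw_n+z_{m+1})\displayperiod
\]
For fixed $(z,w)$ in the associativity domain this is holomorphic in $s$ on a punctured disc $0<|s|<\rho$ with $\rho=(\min_{j\le m}|z_j-z_{m+1}|)/\max_i|w_i|$, and by the $F/\prod(v_i-v_j)^{N_{ij}}$ structure of Proposition~\ref{proposition:ProductOfYsAbsolutelyConvergent} it has at worst a finite-order pole at $s=0$, where the cluster points collide. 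Its Laurent expansion $g(s)=\sum_d g_d s^d$ converges locally normally on that annulus, and the associativity hypothesis $\max_i|w_i|<\min_{j}|z_j-z_{m+1}|$ is precisely $\rho>1$, so $s=1$ lies in the annulus and $g(1)=\sum_d g_d$ converges locally normally in each component. Since $\C^\times$-equivariance gives $p_k\mu(b,sw)=s^{\,k-\sum_j|b_j|}p_k\mu(b,w)$, reindexing the associativity sum by the cluster-degree $k$ identifies it with $\sum_d g_d$ under $k=d+\sum_j|b_j|$, provided one establishes the coefficient identity
\[
  g_d=\mu\big(a_1,z_1,\ldots,a_m,z_m,\,p_{d+\sum_j|b_j|}\mu(b_1,w_1,\ldots,b_n,w_n),\,z_{m+1}\big)\displayperiod
  \tag{$\star$}
\]
Granting $(\star)$, the associativity sum equals $\sum_d g_d=g(1)$, which is the required right-hand side, with locally normal convergence exactly on the associativity domain.

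Both sides of $(\star)$ are holomorphic in $(z,w)$: the left as the contour integral $g_d=\ootpi\int_{|s|=r}g(s)\,s^{-d-1}\,ds$ of a holomorphic family, the right as an honest value of $\mu$ composed with the holomorphic map $w\mapsto p_k\mu(b,w)$. So by the identity theorem it suffices to prove $(\star)$ on one ordered chamber, where I expand everything with the product formula of Proposition~\ref{proposition:ProductOfYsAbsolutelyConvergent}: $g(s)$ becomes $\sum_{\vec k,\vec l}(a_1)_{(k_1)}\cdots(a_m)_{(k_m)}(b_1)_{(l_1)}\cdots(b_n)_{(l_n)}\vac\,\prod_i z_i^{-k_i-1}\prod_j(sw_j+z_{m+1})^{-l_j-1}$, while the right-hand side of $(\star)$ expands as $\mu$ of $m+1$ arguments with $p_k\mu(b,w)$ in the last slot. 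Extracting the coefficient of $s^d$ by the binomial expansion of $(sw_j+z_{m+1})^{-l_j-1}$ and using $Y(e,z_{m+1})\vac=e^{z_{m+1}T}e$ — equivalently the translation covariance $e^{uT}\mu(b,w)=\mu(b_1,w_1+u,\ldots,b_n,w_n+u)$, which follows from $T\vac=0$ and $e^{uT}Y(e,w)e^{-uT}=Y(e,w+u)$ — reduces $(\star)$ to the associativity of the vertex algebra $(\V,Y)$, namely that the relevant iterate and product are expansions of one and the same element; this is the standard consequence of the locality axiom.

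The main obstacle is $(\star)$: everything else is bookkeeping, but $(\star)$ is where the vertex-algebra associativity genuinely enters. Its proof requires both the rearrangement of the triple family of infinite mode-sums into a single componentwise absolutely convergent multi-series — legitimate because $\mu(b,w)\in\Vbarbb$ and the separation $\max_i|w_i|<\min_j|z_j-z_{m+1}|$ forces geometric decay — and the passage from locality to the iterate-equals-product identity. I expect the bounded-below property of $\mu(b,w)$, which guarantees that only finitely many cluster-degrees sit at the bottom, to be exactly what makes both the componentwise sums and their interchange with the contour integral rigorous.
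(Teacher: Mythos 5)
Your handling of the non-associativity axioms (insertion at zero via creation, $\C^\times$-equivariance on $D_m$ plus the identity theorem, meromorphicity from the entire numerator $F_a$, permutation invariance from Proposition~\ref{proposition:ProductOfYsAbsolutelyConvergent}) coincides with the paper's. Your reduction of associativity to the coefficient identity $(\star)$ via the scaling variable $s$ is also a sound framework: the hypothesis $\max_i|w_i|<\min_j|z_j-z_{m+1}|$ is indeed equivalent to $\rho>1$, and granting $(\star)$, Cauchy estimates on annuli $r_-<1<r_+$ (uniform over compact subsets of the associativity domain) would give the required locally normal convergence of $\sum_d g_d = g(1)$.

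The genuine gap is that $(\star)$ itself, which you correctly identify as carrying all the content, is never proved; it rests on two unproven inputs. First, the translation covariance $e^{uT}\mu(b,w)=\mu(b_1,w_1+u,\ldots,b_n,w_n+u)$ is asserted to ``follow from'' the conjugation formula $e^{uT}Y(e,w)e^{-uT}=Y(e,w+u)$; but in this analytic setting that covariance is exactly Proposition~\ref{proposition:MuAndTranslations}, which the paper proves by a nontrivial induction on $n$ showing both sides solve the initial value problem $\partial_z\varphi=T\varphi$ in $\mathcal{O}(B,\Vbarbb)$, using $[T,b_{(k)}]=-kb_{(k-1)}$; passing from the formal conjugation identity to the convergent multi-series requires precisely the convergence control you are trying to establish. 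Second, and more seriously, you reduce $(\star)$ to the iterate-equals-product identity as ``the standard consequence of the locality axiom.'' That theorem is not among the results available in the paper, its usual proofs (via the Jacobi identity or weak associativity) are a substantial body of formal calculus, and even granted in its standard two-field form it does not directly yield your version with $m$ outer insertions, $n$ inner insertions, and an infinite sum over the cluster degree $k$ with locally normal convergence --- bridging exactly that is the content of the paper's Proposition~\ref{proposition:MuIsAssociative}. The paper's proof avoids any such black box: it uses Proposition~\ref{proposition:MuAndTranslations} to translate the configuration into a position where (after permuting) $|z_1|>\ldots>|z_m|>|w_1+z_{m+1}|>\ldots>|w_n+z_{m+1}|$, expands \emph{both} sides of the associativity equation by Proposition~\ref{proposition:ProductOfYsAbsolutelyConvergent} --- crucially, the side $\sum_k\mu(a_1,z_1,\ldots,p_k\mu(b,w),z_{m+1})$ is expanded in the \emph{abstract} modes $(p_k\mu(b,w))_{(i_{m+1})}$, so no relation between these and the modes of the $b_j$, i.e.\ no iterate-product theorem, is ever needed --- and finishes by interchanging absolutely convergent sums using $\mu(b,w+z_{m+1})=\sum_k Y(p_k\mu(b,w),z_{m+1})\vac$. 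To complete your route you would either have to import and extend the formal-calculus associativity theorem (abandoning self-containedness and still owing the multi-field convergence argument), or restructure the proof of $(\star)$ along the paper's lines, at which point the scaling variable becomes unnecessary.
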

\begin{proof}
  Let~$m \geq 0$.
  Both the multilinearity and~$\C^\times$-equivariance may be checked on~$D_m$ by the uniqueness of analytic continuations from~$D_m$ to~$\C^m \setminus\Delta$. 
  On~$D_m$, we can express~$\mu_Y$ in terms of~$Y$ and multilinearity follows from the fact that~$Y$ is linear and composition in~$\End(\V)$ is bilinear.
  For~$\C^\times$-equivariance, it suffices to consider homogeneous~$a_1,\ldots, a_m \in \V$:
  for all~$z \in \C^\times$ and~$w \in D_m$,
  \begin{align*}
    &z.\mu_Y(a,w) \\
    &= \sum_{k_1,\ldots,k_m \in \Z} z.\left({a_1}_{(k_1)}w^{-k_1-1}_1 \ldots {a_m}_{(k_m)} w^{-k_m - 1}_m\vac\right)  \\
               &= \sum_{k_1,\ldots,k_m \in \Z} z^{|a_1| - k_1 - 1 + \ldots+|a_m| - k_m - 1} {a_1}_{(k_1)}w^{-k_1-1}_1 \ldots {a_m}_{(k_m)} w^{-k_m - 1}_m\vac  \\
    &= \sum_{k_1,\ldots,k_m \in \Z} (z.a_1)_{(k_1)}(zw_1)^{-k_1-1} \ldots (z.a_m)_{(k_m)} (zw_m)^{-k_m - 1}\vac \\
    &= \mu_Y(z.a_1,zw_1,\ldots, z.a_m,zw_m).
  \end{align*}
  The axiom about insertion at zero follows from the creation axiom of the vertex algebra: for all~$a\in \V$,
  \begin{align*}
    \mu_Y(a,0) = Y(a,0)\vac = a. 
  \end{align*}
  To see that the meromorphicity axiom is satisfied, let~$N$ be the maximum of the~$N_{ij}$ from the construction of~$\mu_Y$, after fixing some elements of~$\V$.
  By construction,~$\mu_Y$ satisfies the following strong version of meromorphicity:
\begin{itemize}
\item For all~$a_1,\ldots, a_n\in \V$, there is a natural number~$N$ such that the function
   \[
     z  \longmapsto \prod_{i<j} (z_i - z_j)^N \mu(a_1,z_1,\ldots, a_n,z_n)
   \]
   with values in~$\Vbar$ has a holomorphic extension from~$\C^n \setminus\Delta$ to~$\C^n$.
\end{itemize}
Proposition~\ref{proposition:MuIsAssociative}, proven further below, says that~$\mu_Y$ satisfies the associativity axiom.
\end{proof}
The next proposition establishes how~$\mu_Y$ behaves with respect to translations.
It is used in the proof of associativity to arrange~$z_{m+1} = 0$.
A translation by~$z \in \C$ acts on~$\Vbarbb$ (defined by Equation~(\ref{equation:Vbarbb})) via the automorphism~$e^{zT}$ of~$\Vbarbb$ defined by
\[
  e^{zT} x = \sum_{k \in \Z} e^{zT} p_k(x)
\]
which is finite in each component of~$\Vbarbb$ because~$T$ has degree~1 and~$x \in \Vbarbb$ is zero in sufficiently low degree.
\begin{proposition}\label{proposition:MuAndTranslations}
  Let~$b_1,\ldots,b_n \in \V$ and~$w \in \C^n \setminus\Delta$. For all~$z\in\C$,
  \begin{align}
    e^{zT}\mu_Y(b_1,w_1,\ldots,b_n,w_n) = \mu_Y(b_1,w_1+z,\ldots,b_n, w_n +z) \displayperiod\label{display:MuAndTranslations}
  \end{align}
\end{proposition}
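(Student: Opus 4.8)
The plan is to treat both sides of~\eqref{display:MuAndTranslations} as holomorphic~$\Vbar$-valued functions of~$(w,z)$ and to identify them by an ordinary differential equation in~$z$ that, thanks to the boundedness below of~$\mu$, can be solved degree by degree. Write~$f(w,z) = e^{zT}\mu(b_1,w_1,\ldots,b_n,w_n)$ and~$g(w,z) = \mu(b_1,w_1+z,\ldots,b_n,w_n+z)$. Since translating by~$z$ preserves~$\Delta$, both are defined for all~$w \in \C^n\setminus\Delta$ and~$z\in\C$; moreover~$\mu(b,\cdot)\in\Vbarbb$ vanishes below some degree~$C$ independent of the insertion points, so~$e^{zT}$ is applicable and~$f$,~$g$ both vanish below degree~$C$. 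In each fixed degree~$l$ the operator~$e^{zT}$ is a finite sum~$\sum_{0\le j\le l-C}\frac{z^j}{j!}T^j$ on the relevant components, so~$f_l(w,z)$ is a polynomial in~$z$ with holomorphic coefficients in~$w$, and the standard relation~$\frac{d}{dz}e^{zT}=Te^{zT}$ gives~$\partial_z f = Tf$ with~$f(w,0)=\mu(b,w)$.

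The crucial input is the infinitesimal form of translation covariance,
\[
  T\mu(b_1,u_1,\ldots,b_n,u_n) = \sum_{i=1}^n \partial_{u_i}\mu(b_1,u_1,\ldots,b_n,u_n)\displaycomma
\]
valid for all~$u\in\C^n\setminus\Delta$. I would prove this first on~$D_n$, where~$\mu(b,u)=\sum_{k\in\Z^n}(b_1)_{(k_1)}\cdots(b_n)_{(k_n)}\vac\,\prod_i u_i^{-k_i-1}$ converges absolutely in each degree. Applying~$T$ term by term and using~$T\vac=0$ together with~$[T,(b_i)_{(k_i)}]=-k_i(b_i)_{(k_i-1)}$ expresses~$T\mu$ as a sum over~$k$; shifting the summation index~$k_i\mapsto k_i+1$ in the~$i$-th summand (a bijection of the absolutely convergent index set, hence legitimate) turns each contribution into~$(-k_i-1)u_i^{-k_i-2}\prod_{j\ne i}u_j^{-k_j-1}$ times the same coefficient, which is exactly~$\sum_i\partial_{u_i}\mu$. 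Since both sides are holomorphic on the connected set~$\C^n\setminus\Delta$ and agree on the non-empty open subset~$D_n$, the identity theorem extends the identity to all of~$\C^n\setminus\Delta$. Evaluating at~$u = w+z$ gives~$\partial_z g = \sum_i\partial_{u_i}\mu(b,u)|_{u=w+z} = T\mu(b,w+z) = Tg$, so~$f$ and~$g$ satisfy the same equation~$\partial_z F = TF$ with the same value at~$z=0$.

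It then remains to conclude~$f=g$ from~$\partial_z(f-g) = T(f-g)$, $(f-g)(w,0)=0$, and the vanishing below degree~$C$. I would argue by induction on the degree~$l$ for each fixed~$w$. Writing~$h=f-g$, the equation reads~$\partial_z h_l = (Th)_l = T h_{l-1}$ in degree~$l$, because~$T$ has degree~$1$. For~$l < C$ we have~$h_l\equiv 0$; assuming~$h_{l-1}\equiv 0$, the entire function~$z\mapsto h_l(w,z)$ has vanishing~$z$-derivative, hence is constant, and it vanishes at~$z=0$, so~$h_l\equiv0$. This triangular induction terminates in every degree and yields~\eqref{display:MuAndTranslations}.

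The main obstacle, and the step deserving the most care, is the bookkeeping that makes the differential equation rigorous in the infinite product~$\Vbar$: one must confirm that~$e^{zT}$ is genuinely finite in each degree when applied to~$\mu(b,w)$ (so that~$f$ is well-defined and~$\partial_z f = Tf$ holds without convergence worries), and that the reindexing in the infinitesimal identity is justified by the degreewise absolute convergence on~$D_n$. Once these finiteness facts are in place the argument is purely formal, the decisive structural feature being that~$T$ raises degree while~$\mu$ is bounded below, which makes the equation lower-triangular and solvable degree by degree.
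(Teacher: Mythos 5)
Your proof is correct, and its skeleton coincides with the paper's: both sides of~\eqref{display:MuAndTranslations} are shown to solve the initial value problem~$\partial_z \varphi = T\varphi$,~$\varphi(0) = \mu(b,w)$, and uniqueness of solutions finishes the argument. However, you fill in the two halves of this skeleton by genuinely different routes. For the differential equation, the paper first invokes the identity theorem to reduce to~$w$ with~$|w_1| > \cdots > |w_n|$ and~$z$ in a convex region~$B$ where this ordering persists, rewrites both sides as products~$Y(b_1,w_1+z)\cdots Y(b_n,w_n+z)\vac$, and proves~$\partial_z g = Tg$ by induction on~$n$, peeling off the leftmost factor. You instead isolate a standalone lemma, the infinitesimal translation covariance~$T\mu(b,u) = \sum_i \partial_{u_i}\mu(b,u)$ on all of~$\C^n\setminus\Delta$, proved by the same commutator computation~$[T,(b_i)_{(k)}] = -k(b_i)_{(k-1)}$ and reindexing on~$D_n$ and then extended by the identity theorem; the differential equation for the right-hand side is then just the chain rule, with no restriction on~$(w,z)$. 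For uniqueness, the paper appeals to the uniqueness of solutions of finite-dimensional linear ODEs (the fact that~$\varphi(z) = e^{zA}x$ is the unique solution), whereas your degreewise induction~$\partial_z h_l = Th_{l-1}$, starting below the uniform lower bound~$C$, is more elementary and makes explicit exactly where boundedness below enters; you correctly cite Proposition~\ref{proposition:ProductOfYsAbsolutelyConvergent} for the fact that this bound is independent of the insertion points, which is what makes the induction run. The bookkeeping issues you flag (degreewise finiteness of~$e^{zT}$, term-by-term application of~$T$, term-by-term differentiation) are all justified by the same locally-normal-convergence and finite-dimensional-subspace arguments the paper uses elsewhere, so your route is a complete and somewhat more self-contained alternative; what it costs is proving the infinitesimal identity in all~$n$ variables at once rather than one factor at a time.
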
 
In the case~$n=1$ and~$w_1=0$, the proposition together with insertion at zero implies that~$e^{zT}b = Y(b,z)\vac$ for~$b\in \V$ and~$z\in \C$. This means that~$a_{(k)}\vac = \frac{1}{(-k-1)!}T^{-k-1}a$ for~$k\leq -1$, which one could have also deduced directly from the translation axiom of the vertex algebra~$\V$.
In particular~$Ta = a_{(-2)}\vac$ and~$\mu_Y(a,w) = a + Ta w + \ldots$.
Also, since~$e^{zT}a = \mu_Y(a,z)$ for all~$a\in \V$ and~$z\in \C$, the proposition is the special case~$m=0$ of the associativity axiom for~$\mu_Y$.
\begin{proof} 
  Both sides of the equation are holomorphic functions of~$(z,w) \in \C \times (\C^n \setminus\Delta)$.
  By the identity theorem we may assume 
  \begin{align*}
    |w_1| > \ldots > |w_n|,
  \end{align*}
  and having fixed a choice of~$w$, we may assume~$z$ to be in the non-empty, open, and convex region~$B$ of those~$z$ such that 
  \begin{align*}
    |w_1 + z| > \ldots > |w_n +z|.
  \end{align*}
  For~$z\in B$, the equation is equivalent to
  \[
    e^{zT}Y(b_1,w_1)\ldots Y(b_n,w_n)\vac = Y(b_1,w_1+z)\ldots Y(b_n, w_n +z)\vac\displayperiod
  \]
  Let~$f(z)$ equal the left-hand side and~$g(z)$ equal the right-hand side for~$z\in B$.
  The function~$f$ is the unique solution of the (holomorphic) initial value problem
  \begin{align}
    \partial_z \varphi (z) =T\varphi(z)\quad\varphi(0) = Y(b_1,w_1)\ldots Y(b_n, w_n)\vac \quad \varphi \in \mathcal{O}(B;\Vbarbb) \label{display:TranslationInvarianceIVP}
  \end{align}
  as can be deduced from the fact that~$\varphi(z) = e^{zA}x$ is the unique solution of 
  \begin{align}
    \partial_z \varphi (z) =A\varphi(z)\quad\varphi(0) = x \quad \varphi \in \mathcal{O}(B;X)\displayperiod \label{display:TranslationInvarianceIVPFiniteDimensionalVersion}  
  \end{align}
  (Here,~$X$ is not a finite-dimensional subspace of~$\V_k$ for~$k\in\Z$, but instead ranges over a sequence of appropriately chosen finite-dimensional subquotients of~$\Vbarbb$.)

  We prove by induction on~$n$ that~$g$ is a solution of~(\ref{display:TranslationInvarianceIVP}), too.
  The base case~$n=0$ holds because then~$g(z) = \vac$.
The induction hypothesis implies
  \[
    \partial_zY(b_2,w_2+z)\ldots Y(b_n, w_n +z)\vac = TY(b_2,w_2+z_2)\ldots Y(b_n,w_n+z)\vac\displayperiod
  \]
  Proposition~\ref{proposition:ProductOfYs} implies
  \begin{align*}
    g(z) &= \sum_{k\in \Z} {b_1}_{(k)} (w_1+z)^{-k-1} Y(b_2,w_2+z)\ldots Y(b_m, w_m +z)\vac\\
  \intertext{and thus}
    \partial_z g(z) &= \sum_{k \in \Z} {b_1}_{(k)} (-k-1) (w_1+z)^{-k-2} Y(b_2,w_2+z)\ldots Y(b_m, w_m +z)\vac\\
    &\phantom{=} + \sum_{k\in \Z} {b_1}_{(k)} (w_1+z)^{-k-1} \partial_z  Y(b_2,w_2+z)\ldots Y(b_m, w_m +z)\vac\\
    &= \sum_{k \in \Z} {b_1}_{(k-1)} (-k) (w_1+z)^{-k-1} Y(b_2,w_2+z)\ldots Y(b_m, w_m +z)\vac\\
    &\phantom{=}+ \sum_{k\in \Z} {b_1}_{(k)} (w_1+z)^{-k-1} T Y(b_2,w_2+z)\ldots Y(b_m, w_m +z)\vac
  \end{align*}
  after reindexing.
  The translation axiom implies that~$[T,{b_1}_{(k)}] = -k{b_1}_{(k-1)}$.
  Therefore
  \begin{align*}
    \partial_z g(z) &= \sum_{k \in \Z} T{b_1}_{(k)} (w_1+z)^{-k-1} Y(b_2,w_2+z)\ldots Y(b_m, w_m +z)\vac \\
                    &= T Y(b_1,w_1+z)\ldots Y(b_m, w_m +z)\vac = Tg(z),
  \end{align*}
  and this concludes the induction step. 
\end{proof}
\begin{proposition}\label{proposition:MuIsAssociative}
$\mu_Y$ satisfies associativity.
\end{proposition}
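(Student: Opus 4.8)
The plan is to derive associativity from the series description of $\mu$ in Proposition~\ref{proposition:ProductOfYsAbsolutelyConvergent} by introducing a scaling variable that repackages the sum over $k$ as a single Laurent series whose annulus of convergence is dictated by the hypothesis $\max_i|w_i|<\min_j|z_j-z_{m+1}|$. We may assume all $a_i$ and $b_i$ homogeneous. First I would reduce to the case $z_{m+1}=0$: writing $s=z_{m+1}$ and using the translation covariance $e^{sT}\mu(c_1,u_1,\dots)=\mu(c_1,u_1+s,\dots)$ of Proposition~\ref{proposition:MuAndTranslations}, both sides of the associativity identity are obtained from the corresponding identity at $z_{m+1}=0$ (with each $z_j$ replaced by $z_j-s$) by applying $e^{sT}$. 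Since $e^{sT}$ is linear and, acting on the bounded-below values of $\mu$, finite in each degree, it commutes with the locally normal sum, so convergence and equality at $z_{m+1}=0$ imply them for general $z_{m+1}$. Abbreviating $c_k(w)=p_k\mu(b_1,w_1,\dots,b_n,w_n)$ and $S_k(z,w)=\mu(a_1,z_1,\dots,a_m,z_m,c_k(w),0)$, it then suffices to prove that on $\Omega_0=\{\,z_j\neq0\text{ pairwise distinct},\ w_i\text{ pairwise distinct},\ \max_i|w_i|<\min_j|z_j|\,\}$ the sum $\sum_k S_k(z,w)$ converges locally normally in each degree to the function $G(z,w):=\mu(a_1,z_1,\dots,a_m,z_m,b_1,w_1,\dots,b_n,w_n)$, which is holomorphic on $\Omega_0$ because the $m+n$ insertion points stay pairwise distinct there.

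Next I would study $G(z,tw):=\mu(a_1,z_1,\dots,a_m,z_m,b_1,tw_1,\dots,b_n,tw_n)$ as a function of a scalar $t$. For $(z,w)\in\Omega_0$ it is holomorphic in $t$ on the punctured disc $0<|t|<\rho$ with $\rho=\min_j|z_j|/\max_i|w_i|>1$, since the points $tw_i$ collide only as $t\to0$. On the ordered region $R_0$ where $|z_1|>\dots>|z_m|>|w_1|>\dots>|w_n|>0$, Proposition~\ref{proposition:ProductOfYsAbsolutelyConvergent} expands $G(z,tw)$ as the locally normal, hence in each degree absolutely convergent, series $\sum (a_1)_{(j_1)}\cdots(a_m)_{(j_m)}(b_1)_{(\ell_1)}\cdots(b_n)_{(\ell_n)}\vac\,z_1^{-j_1-1}\cdots z_m^{-j_m-1}(tw_1)^{-\ell_1-1}\cdots(tw_n)^{-\ell_n-1}$ over $j_1,\dots,j_m,\ell_1,\dots,\ell_n\in\Z$. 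The power of $t$ equals $-\sum_i(\ell_i+1)=k-D_b$, where $k=\sum_i(|b_i|-\ell_i-1)$ is the degree of $(b_1)_{(\ell_1)}\cdots(b_n)_{(\ell_n)}\vac$ and $D_b=\sum_{i=1}^n|b_i|$. Grouping the absolutely convergent series by this power collects exactly $c_k(w)=\sum_{\deg=k}(b_1)_{(\ell_1)}\cdots(b_n)_{(\ell_n)}\vac\,w_1^{-\ell_1-1}\cdots w_n^{-\ell_n-1}$ from the $b$-modes and, using the creation axiom $Y(c_k,0)\vac=c_k$, identifies the coefficient of $t^{k-D_b}$ with $S_k(z,w)$. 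Thus $G(z,tw)=\sum_k t^{k-D_b}S_k(z,w)$ is the Laurent expansion of $G(z,tw)$ in $t$, so applying $p_l$ yields the contour formula
\[
  p_l S_k(z,w)=\ootpi\oint_{|t|=1}t^{D_b-k-1}\,p_l G(z,tw)\,dt
\]
on $R_0$; the $\C^\times$-equivariance relation $c_k(tw)=t^{k-D_b}c_k(w)$ makes this grouping self-consistent.

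Finally I would propagate the contour formula to all of $\Omega_0$ and read off the claim. Both sides of the displayed formula are holomorphic in $(z,w)$ on $\Omega_0$ — the right-hand side because $|t|=1<\rho$ keeps $(z,tw)$ in the domain of $G$ — and they agree on the nonempty open set $R_0$, so by the identity theorem, using that $\Omega_0$ is connected, the formula holds throughout $\Omega_0$. Consequently $\sum_k p_l S_k(z,w)\,t^{k-D_b}$ is the Laurent series of $t\mapsto p_l G(z,tw)$ on $0<|t|<\rho$; since $1<\rho$ on $\Omega_0$, evaluating at $t=1$ gives $\sum_k p_l S_k(z,w)=p_l G(z,w)$. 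Local normality follows from the Cauchy estimate $\lVert p_l S_k\rVert_K\le r_0^{D_b-k}\sup_{(z,w)\in K,\ |t|=r_0}\lVert p_l G(z,tw)\rVert$ for a compact $K\subseteq\Omega_0$ and any $r_0$ with $1<r_0<\min_K\rho$, together with the fact that $S_k=0$ for $k$ below a bound independent of $w$ (because $\mu(b,w)$ vanishes in sufficiently low degree uniformly in $w$ by Proposition~\ref{proposition:ProductOfYsAbsolutelyConvergent}); the resulting tail $\sum_k r_0^{D_b-k}$ is a convergent geometric series. Undoing the translation reduction then finishes the proof.

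The main obstacle is precisely establishing convergence of the associativity sum on the whole region $\Omega_0$, rather than only on the ordered region $R_0$ where the series of Proposition~\ref{proposition:ProductOfYsAbsolutelyConvergent} can be rearranged directly. The device that overcomes it is the scaling variable $t$: it turns the sum over $k$ into one Laurent series whose radius of convergence is governed exactly by $\max_i|w_i|<\min_j|z_j|$, so that both the equality and its local normality fall out of the Cauchy estimates.
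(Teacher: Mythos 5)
Your argument is correct, but it reaches associativity by a genuinely different route than the paper's. The shared ingredients are the translation covariance of Proposition~\ref{proposition:MuAndTranslations} and the ordered-region mode expansion of Proposition~\ref{proposition:ProductOfYsAbsolutelyConvergent}; the difference is how an arbitrary admissible configuration gets handled. The paper translates the whole configuration by a generic~$y$ (close to~$z_{m+1}$, off finitely many lines, followed by a permutation) so that all~$m+n$ moduli become strictly ordered; the identity is then obtained pointwise everywhere as a regrouping of a single absolutely convergent iterated mode series, with no analytic continuation of the identity itself. You instead normalize only~$z_{m+1}=0$ and handle unordered configurations with the scaling variable~$t$: the contour formula expressing~$p_l S_k(z,w)$ as the coefficient of~$t^{k-D_b}$ of~$t\mapsto p_l G(z,tw)$ is proved on the ordered region and propagated to all of~$\Omega_0$ by the identity theorem (this needs connectedness of~$\Omega_0$ and holomorphicity of both sides with values in finite-dimensional subspaces, both of which hold), after which equality at~$t=1$ and locally normal convergence follow from Laurent theory and Cauchy estimates, the tail~$\sum_k r_0^{D_b-k}$ being geometric thanks to the uniform low-degree vanishing of~$p_k\mu(b,w)$. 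What each buys: the paper's generic translation keeps the whole proof a pointwise rearrangement argument, while your scaling trick shows exactly how the hypothesis~$\max_i|w_i|<\min_j|z_j-z_{m+1}|$ enters (as~$\rho>1$ for the~$t$-annulus) and gives a cleaner, more quantitative proof of local normality than the paper's brief closing remark. Two steps to spell out in a write-up: identifying the~$t^{k-D_b}$-coefficient with~$S_k(z,w)$ requires expanding~$S_k(z,w)=\mu(a_1,z_1,\ldots,a_m,z_m,c_k(w),0)$ itself by Proposition~\ref{proposition:ProductOfYsAbsolutelyConvergent} (using~$(z_1,\ldots,z_m,0)\in D_{m+1}$ and the creation axiom) and then interchanging the sum over the~$a$-modes with the series defining~$c_k(w)$, which is legitimate because that series converges in a finite-dimensional subspace; and in undoing the reduction, the operator~$e^{z_{m+1}T}$ has coefficients depending on the variable~$z_{m+1}$, but each degree component is still a finite sum of holomorphic multiples of fixed linear maps, so local normality jointly in all variables is preserved---the same level of care the paper takes at its corresponding step.
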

\begin{proof}
  Let~$a_1,\ldots,a_{m+1},b_1,\ldots,b_n \in \V$.
  Associativity is about a normally convergent sum of holomorphic functions of~$
  (z,w)\in (\C^{m+1}\setminus\Delta)\times(\C^n\setminus\Delta)$ with $\max_j |w_j| < \min_{1\leq i\leq m} |z_{m+1} - z_i|$.
  First, we prove a version of associativity for~$z_{m+1}$ fixed and equal to zero.
  The set of~$(z,w) \in (\C^{m}\setminus\Delta)\times (\C^n\setminus\Delta)$ such that~$\max_j |w_j| < \min_i |z_i|$ is covered by the set of open subsets of the form
  \[
    U_{r,R} = \{ (z,w) \in \C^{m}\setminus\Delta\times\C^n\setminus\Delta \mid \max_j |w_j| < r < R <\min_i |z_i|\}
  \]
  where~$0<r<R$, so it suffices to prove normal convergence and the equation in the associativity axiom on~$U_{r,R}$. 
  Let
  \[
    W_{r,R} = U_{r,R} \times A_{0,R/r} \quad \text{where } A_{0,R/r} = \{ \lambda \in \C^\times \mid 0 < |\lambda| < R/r \}\displaycomma
  \]
  and let~$f \in \mathcal{O}(W_{r,R};\Vbarbb)$ be the holomorphic function defined by
  \begin{align}
    f(z_1,\ldots,z_m, w_1,\ldots, w_n, \lambda) = \mu_Y(a_1,z_1,\ldots, a_m, z_m, \lambda.b_1, \lambda w_1, \ldots, \lambda.b_n, \lambda w_n)\displayperiod\notag
  \end{align}
  We consider the Laurent expansion
  \begin{align}
    f(z_1,\ldots,z_m, w_1,\ldots, w_n, \lambda) = \sum_{k\in \Z} c_k(z_1,\ldots,z_m,w_1,\ldots,w_n) \lambda^k\label{equation:SumOverLambda}
  \end{align}
  on~$A_{0,R/r}$ where~$c_k \in \mathcal{O}(U_{r,R};\Vbarbb)$ such that
  \begin{align*}
  c_k(z_1,\ldots,z_m,w_1,\ldots,w_n) &= \ootpi\int_{S^1} f(z_1,\ldots,z_m, w_1,\ldots, w_n,\lambda)\lambda^{-k-1} d\lambda
  \end{align*}
  for all~$k\in\Z$.
  It suffices to consider the case that~$U_{r,R}$ is non-empty, and, in this case, the subset of~$(z,w)\in U_{r,R}$ such that $|z_1|>\ldots>|z_m|>|w_1|>\ldots>|w_n|$ is open and non-empty. For such~$(z,w)$,
\begin{align*}
&c_k(z_1,\ldots,z_m,w_1,\ldots,w_n)\\
&=\ootpi\int_{S^1} Y(a_1,z_1)\ldots Y(a_m,z_m)Y(\lambda.b_1,\lambda w_1)\ldots Y(\lambda.b_n,\lambda w_n)\vac \lambda^{-k-1} d\lambda \\
&=Y(a_1,z_1)\ldots Y(a_m,z_m) \ootpi\int_{S^1} Y(\lambda.b_1,\lambda w_1)\ldots Y(\lambda.b_n,\lambda w_n)\vac \lambda^{-k-1} d\lambda \\
&=Y(a_1,z_1)\ldots Y(a_m,z_m)\ootpi\int_{S^1} \mu_Y(\lambda.b_1,\lambda w_1,\ldots,\lambda.b_n,\lambda w_n) \lambda^{-k-1} d\lambda \\
&=Y(a_1,z_1)\ldots Y(a_m,z_m)\ootpi \int_{S^1} \lambda.\mu_Y(b_1,w_1,\ldots,b_n,w_n) \lambda^{-k-1} d\lambda \\
&=Y(a_1,z_1)\ldots Y(a_m,z_m) p_k\mu_Y(b_1,w_1,\ldots,b_n,w_n)\\
&=\mu_Y(a_1,z_1,\ldots,a_m,z_m,p_k\mu_Y(b_1,w_1,\ldots,b_n,w_n),0)
\end{align*}
so, by the identity theorem on the domain~$U_{r,R}$, the first and last expression agree for all~$(z,w) \in U_{r,R}$.
Evaluating the normally convergent Laurent expansion in Equation~(\ref{equation:SumOverLambda}) at~$\lambda=1$ yields the normally convergent sum
\begin{align*}
&\mu_Y(a_1,z_1,\ldots,a_m,z_m, b_1,w_1,\ldots,b_n,w_n)=f(z_1,\ldots,z_m,w_1,\ldots,w_n,1)\\
&= \sum_{k\in \Z} \mu_Y(a_1,z_1,\ldots,a_m,z_m,p_k\mu_Y(b_1,w_1,\ldots,b_n,w_n),0)\displayperiod
\end{align*}
This finishes the proof of the version of associativity for~$z_{m+1} = 0$.

Before we treat~$z_{m+1}$ as a variable, we prove that there is an integer~$H$ depending on~$a_1,\ldots,a_m,b_1,\ldots,b_n$ but not on~$z,w,k$ such that
\begin{align}
p_h\mu_Y(a_1,z_1,\ldots,a_m,z_m,p_k\mu_Y(b_1,w_1,\ldots,b_n,w_n),0) = 0\label{equation:phZero}
\end{align}
for all integers~$h<H$.
Without loss of generality, we may assume that~$b_1,\ldots,b_n$ are homogeneous.
There is an integer~$H$ indepedent of~$z,w,\lambda$ such that
\begin{align*}
&p_h(\mu_Y(a_1,z_1,\ldots,a_m,z_m, \lambda.b_1, \lambda w_1,\ldots,\lambda .b_n,\lambda w_n))\\
&=\lambda^{|b_1|+\ldots+|b_n|} p_h(\mu_Y(a_1,z_1,\ldots,a_m,z_m,b_1, \lambda w_1,\ldots,b_n,\lambda w_n))=0
\end{align*}
for all integers~$h<H$ by Proposition~\ref{proposition:ProductOfYs}.
This implies that the integrand in
\begin{align*}
&p_h\mu_Y(a_1,z_1,\ldots,a_m,z_m,p_k\mu_Y(b_1,w_1,\ldots,b_n,w_n),0)\\
&=\ootpi \int_{S^1} p_h(\mu_Y(a_1,z_1,\ldots,a_m,z_m, \lambda.b_1, \lambda w_1,\ldots,\lambda .b_n,\lambda w_n))\lambda^{-k-1}d \lambda
\end{align*}
is zero for all integers~$h<H$, so Equation~(\ref{equation:phZero}) follows.

We now treat~$z_{m+1}$ as a variable.
The case of~$z_{m+1} = 0$ implies that
\begin{align}
  &\sum_{k\in\Z} \mu_Y(a_1,z_1-z_{m+1},\ldots,a_m,z_m-z_{m+1},p_k\mu_Y(b_1,w_1,\ldots,b_n,w_n),0)\notag\\
  &=\mu_Y(a_1,z_1-z_{m+1},\ldots,a_m,z_m-z_{m+1},b_1,w_1,\ldots,b_n,w_n)\label{equation:AssociativityForZ1MinusZMPlus1}
\end{align}
with normal convergence as a sum of functions of~$z_1,\ldots,z_m,z_{m+1},w_1,\ldots,w_n$ because the subtraction map~$\C\times\C \rightarrow \C$ is continuous.
We use the abbreviations
\begin{align*}
&b=b_1\otimes\ldots\otimes b_n,\quad  w=(w_1,\ldots,w_n),\quad \mu_Y(b,w) = \mu_Y(b_1,w_1,\ldots,b_n,w_n)
\end{align*}
in the following.
Let~$l\in\Z$.
Proposition~\ref{proposition:MuAndTranslations} and Equation~(\ref{equation:phZero}) imply
\begin{align*}
&p_l\mu_Y(a_1,z_1,\ldots,a_m,z_m,p_k\mu_Y(b_1,w_1,\ldots,b_n,w_n),z_{m+1})\\
&= p_l e^{z_{m+1}T}\mu_Y(a_1,z_1-z_{m+1},\ldots,a_m,z_m-z_{m+1},p_k\mu_Y(b,w),0)\\
&= p_l \sum^\infty_{i=0} \frac{z^i_{m+1}}{i!}T^i\mu_Y(a_1,z_1-z_{m+1},\ldots,a_m,z_m-z_{m+1},p_k\mu_Y(b,w),0)\\
&= \sum^{l-H}_{i=0} \frac{z^i_{m+1}}{i!}T^i p_{l-i}\mu_Y(a_1,z_1-z_{m+1},\ldots,a_m,z_m-z_{m+1},p_k\mu_Y(b,w),0).
\end{align*}
Applying the homogeneous operator~$T$ of degree~1, multiplying with the locally bounded function~$z^i_{m+1}$ of~$z_{m+1}$, and taking the finite sum over~$i$ preserves normal convergence, so Equation~(\ref{equation:AssociativityForZ1MinusZMPlus1}) with~$l-i$ substituted for~$l$ implies that
\begin{align*}
  \sum_{k\in\Z} p_l\mu_Y(a_1,z_1,\ldots,a_m,z_m,p_k\mu_Y(b_1,w_1,\ldots,b_n,w_n),z_{m+1})
\end{align*}
is normally convergent and equal to 
\begin{align*}
  &\sum^{l-H}_{i=0} \frac{z^i_{m+1}}{i!}T^i \sum_{k\in\Z} p_{l-i}\mu_Y(a_1,z_1-z_{m+1},\ldots,a_m,z_m-z_{m+1},p_k\mu_Y(b,w),0)\\
  &=\sum^{l-H}_{i=0} \frac{z^i_{m+1}}{i!}T^i p_{l-i}\mu_Y(a_1,z_1-z_{m+1},\ldots,a_m,z_m-z_{m+1},b_1,w_1,\ldots,b_n,w_n)\\
  &= p_l e^{z_{m+1}T} \mu_Y(a_1,z_1-z_{m+1},\ldots,a_m,z_m-z_{m+1},b_1,w_1,\ldots,b_n,w_n)\\
  &= p_l\mu_Y(a_1,z_1,\ldots,a_m,z_m,b_1,w_1+z_{m+1},\ldots,b_n,w_n+z_{m+1}).
\end{align*}
\end{proof} 
\bibliographystyle{plain}
\bibliography{geom_va}  

\begin{thebibliography}{1}

\bibitem{CostelloGwilliam}
Kevin Costello and Owen Gwilliam.
\newblock {\em Factorization algebras in quantum field theory. {V}ol. 1},
  volume~31 of {\em New Mathematical Monographs}.
\newblock Cambridge University Press, Cambridge, 2017.

\bibitem{FrenkelBenZvi}
Edward Frenkel and David Ben-Zvi.
\newblock {\em Vertex algebras and algebraic curves}, volume~88 of {\em
  Mathematical Surveys and Monographs}.
\newblock American Mathematical Society, Providence, RI, second edition, 2004.

\bibitem{FrenkelLepowskyMeurman}
Igor Frenkel, James Lepowsky, and Arne Meurman.
\newblock {\em Vertex operator algebras and the {M}onster}, volume 134 of {\em
  Pure and Applied Mathematics}.
\newblock Academic Press, Inc., Boston, MA, 1988.

\bibitem{FrenkelHuangLepowsky}
Igor~B. Frenkel, Yi-Zhi Huang, and James Lepowsky.
\newblock On axiomatic approaches to vertex operator algebras and modules.
\newblock {\em Mem. Amer. Math. Soc.}, 104(494):viii+64, 1993.

\bibitem{GunningRossi}
Robert~C. Gunning and Hugo Rossi.
\newblock {\em Analytic functions of several complex variables}.
\newblock AMS Chelsea Publishing, Providence, RI, 2009.
\newblock Reprint of the 1965 original.

\bibitem{HuangTwoDimConfGeomAndVOAs}
Yi-Zhi Huang.
\newblock {\em Two-dimensional conformal geometry and vertex operator
  algebras}, volume 148 of {\em Progress in Mathematics}.
\newblock Birkh\"{a}user Boston, Inc., Boston, MA, 1997.

\bibitem{Remmert}
R.~Remmert.
\newblock {\em Theory of Complex Functions}.
\newblock Graduate Texts in Mathematics. Springer New York, 1991.

\bibitem{RunkelCFTandVOAs}
Ingo Runkel.
\newblock Conformal field theory and vertex operator algebras.
\newblock In {\em Operator algebras and mathematical physics}, volume~80 of
  {\em Adv. Stud. Pure Math.}, pages 1--22. Math. Soc. Japan, Tokyo, 2019.

\end{thebibliography}
\end{document}